\documentclass[a4paper,11pt,reqno]{amsart}

\usepackage{amsmath,amsthm}
\usepackage{amssymb,latexsym}
\usepackage{amsaddr}
\usepackage{graphicx,subfig}
\usepackage{epsfig}
\usepackage{color}

\newtheorem{thm}{Theorem}[section]

\newtheorem{lma}[thm]{Lemma}

\newtheorem{prp}[thm]{Proposition}

\newtheorem{clm}[thm]{Claim}

\newtheorem{conj}[thm]{Conjecture}

\numberwithin{equation}{section}

\def\eps{\varepsilon}
\def\Dmon{\Delta_{{\rm mon}}}

\title{An edge-coloured version of Dirac's Theorem}


\author{Allan Lo}
\address{School of Mathematics, University of Birmingham,\\Birmingham, B15 2TT, UK}
\email{s.a.lo@bham.ac.uk}
\thanks {The research leading to these results was supported by the  European Research Council
under the ERC Grant Agreement no. 258345.
}
\date{\today}
\keywords{proper edge-coloring, $2$-factor, Hamiltonian cycle}
\begin{document}

\begin{abstract}
Let $G$ be an edge-coloured graph. 
The minimum colour degree $\delta^c(G)$ of $G$ is the largest integer $k$ such that, for every vertex~$v$, there are at least $k$ distinct colours on edges incident to~$v$.
We say that $G$ is properly coloured if no two adjacent edges have the same colour.
In this paper, we show that every edge-coloured graph $G$ with $\delta^c(G) \ge  2|G|/3$ contains a properly coloured $2$-factor.
Furthermore, we show that for any $\eps > 0 $ there exists an integer $n_0$ such that every edge-coloured graph $G$ with $|G| = n \ge n_0$ and $\delta^c(G) \ge ( 2/3 + \eps ) n $ contains a properly coloured cycle of length $\ell$ for every $3 \le \ell \le n$.
This result is best possible in the sense that the statement is false for $\delta^c(G) < 2n/3 $.
\end{abstract}

\maketitle

\section{Introduction}

A classical theorem of Dirac~\cite{MR0047308} states that every graph $G$ on $n \ge 3$ vertices with minimum degree $\delta(G) \ge n/2$ contains a Hamiltonian cycle.
In this paper, we generalise this result to edge-coloured graphs.

An \emph{edge-coloured graph} is a graph $G$ with an edge colouring~$c$ of~$G$.
We say that $G$ is \emph{properly coloured} if no two adjacent edges of $G$ have the same colour.
Moreover, $G$ is said to be \emph{rainbow} if all edges have distinct colours. 
We consider the following analogue of degree for edge-coloured graphs~$G$.
Given a vertex $v \in V(G)$, the \emph{colour degree $d^c(v)$} is the number of distinct colours of edges incident to~$v$.
The \textit{minimum colour degree} $\delta^c(G)$ of an edge-coloured graph~$G$ is the minimum $d^c(v)$ over all vertices~$v$ in~$G$.

One elementary result of graph theory states that every graph $G$ with $\delta(G) \ge 2$ contains a cycle.
However, for all $k \ge 2$, there exist edge-coloured graphs $G$ with $\delta^c(G) \ge k$ that do not contain any properly coloured cycles. 
Grossman and H{\"a}ggkvist~\cite{MR701173} gave a sufficient condition for the existence of properly coloured cycles in edge-coloured graphs with two colours, which does not depend on $\delta^c(G)$.
Later on, Yeo~\cite{MR1438622} extended the result to edge-coloured graphs with any number of colours.

One natural generalisation of Dirac's theorem is to determine the minimum colour degree threshold for the existence of a rainbow Hamiltonian cycle.
However, such thresholds do not exist for all $n$.
Indeed, for every even integer $n >3$, there exists a properly coloured complete graph $K_n^c$ on $n$ vertices using exactly $n-1$ colours.
Note that $\delta^c(K_n^c) = n-1$, but $K_n^c$ does not contain a rainbow Hamiltonian cycle.
In fact, for each $p \ge 2$, there is a properly coloured~$K_{2^p}^c$ that does not contain any rainbow Hamiltonian path (see~\cite{MR758880}).
Hence, a better question would be to ask for the minimum colour degree threshold for the existence of a properly coloured Hamiltonian cycle.

The problem of finding properly coloured spanning subgraphs in edge-coloured complete graphs $K_n^c$ has been investigated by numerous researchers.
Bang-Jensen, Gutin and Yeo~\cite{MR1609957} proved that if $K_n^c$ contains a properly coloured $2$-factor, then $K_n^c$ also contains a properly coloured Hamiltonian path.
A graph $G$ is said to be a \emph{$1$-path-cycle} if $G$ is a vertex-disjoint union of at most one path and a number of cycles.
Feng, Giesen, Guo, Gutin, Jensen and Rafiey~\cite{MR2270727} showed that $K_n^c$ contains a properly coloured Hamiltonian path if and only if it contains a spanning properly coloured $1$-path-cycle.
We define \emph{$\Dmon(K_n^c)$} to be the maximum number of edges of the same colour incident to the same vertex.
In other words, $\Dmon (K_n^c) = \max \Delta(H)$ over all monochromatic subgraphs $H \subseteq K_n^c$, where an edge-coloured graph $G$ is \emph{monochromatic} if all edges have the same colour.
Notice that $\Dmon (K_n^c) + \delta^c (K_n^c) \le n$.
Bollob\'as and Erd\H{o}s~\cite{MR0411999} proved that if $\Dmon(K_n^c) \le n/69$, then $K_n^c$ contains a properly coloured Hamiltonian cycle.
They further conjectured that $\Dmon(K_n^c)  < \lfloor n/2\rfloor$ suffices.
Their result was subsequently improved by Chen and Daykin~\cite{MR0422070}, Shearer~\cite{MR523092} and Alon and Gutin~\cite{MR1610269}.
In~\cite{LoPCHCinKn}, the author showed that for any $\eps >0$, every $K_n^c$ with $\Dmon(K_n^c)  < (1/2 - \eps) n $ contains a properly coloured Hamiltonian cycle, provided $n$ is large enough.
Therefore, the conjecture of Bollob\'as and Erd\H{o}s is true asymptotically.
For a survey regarding properly coloured subgraphs in edge-coloured graphs, we recommend Chapter~16 of~\cite{MR2472389}.

Let $G$ be an edge-coloured graph (not necessarily complete).
Li and Wang~\cite{MR2519172} proved that $G$ contains a properly coloured path of length $2\delta^c(G)$ or a properly coloured cycle of length at least $2\delta^c(G)/3$.
In~\cite{Lo10}, the author improved this result by showing that $G$ contains a properly coloured path of length $2\delta^c(G)$ or a properly coloured cycle of length at least $\delta^c(G)+1$.
In the same paper, the author showed that every connected edge-coloured graph~$G$ contains a properly coloured Hamiltonian cycle or a properly coloured path of length at least $6\delta^c(G) /5-1$.
Furthermore, the author also conjectured the following. 

\begin{conj}[\cite{Lo10}] \label{conj:path}
Every connected edge-coloured graph $G$ contains a properly coloured Hamiltonian cycle or a properly coloured path of length $\lfloor 3\delta^c(G)/2 \rfloor$.
\end{conj}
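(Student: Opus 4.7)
My plan is to prove the conjecture by the standard extremal/rotation technique from Hamiltonian graph theory, adapted to the edge-coloured setting. Write $k = \delta^c(G)$ and let $P = v_0 v_1 \cdots v_\ell$ be a longest properly coloured path in $G$; assume for contradiction that $\ell < \lfloor 3k/2 \rfloor$. The goal is then to produce a properly coloured Hamiltonian cycle. At the endpoint $v_0$ there are at least $k-1$ colours distinct from $c(v_0 v_1)$, and by the maximality of $P$ each such colour must appear on an edge from $v_0$ to $V(P) \setminus \{v_0, v_1\}$. For every such chord $v_0 v_i$ whose colour differs from $c(v_{i-1} v_i)$, the rotated sequence $v_{i-1} v_{i-2} \cdots v_0 v_i v_{i+1} \cdots v_\ell$ is a properly coloured path of the same length with new endpoint $v_{i-1}$, and similarly at the far end.

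The next step is to iterate these P\'osa-type rotations at both ends of $P$ and collect two sets $S_0, S_\ell \subseteq V(P)$ of vertices that can be realised simultaneously as endpoints of a properly coloured spanning path on $V(P)$. A careful double counting, leveraging $\ell < 3k/2$, should yield $|S_0|, |S_\ell| \ge k/2 + O(1)$, which is precisely the quantitative content needed. Applying the colour degree condition at a vertex in $S_0$ then forces the existence of an edge between $S_0$ and $S_\ell$ whose colour differs from that of the two boundary edges it would replace, producing a properly coloured cycle $C$ on $V(P)$. If $V(P) \ne V(G)$, then connectivity of $G$ together with maximality of $P$ can be exploited: we find $w \notin V(P)$ adjacent to $V(P)$, and further rotations combined with the colour condition let us attach $w$ to a rotated version of $P$, contradicting the choice of $P$. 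Otherwise $C$ is the desired properly coloured Hamiltonian cycle.

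The main technical obstacle is the bookkeeping of colour conflicts during rotations. In the uncoloured P\'osa argument every chord $v_0 v_i$ gives a rotation; here the chord is usable only when its colour differs from that of $v_{i-1} v_i$, and after one rotation the forbidden colour at the new endpoint has changed, so the set of allowed next-rotation chords evolves in a way that couples the colour of the most recently used edge to the colour degrees of future pivots. The sharp constant $3/2$ appears to require a delicate double counting that bounds how many such colour coincidences can occur along $P$, and controlling this error term so that it stays below $\ell - \lfloor 3k/2 \rfloor$ is what I expect to be the hardest step; in particular, a monochromatic path hidden inside $P$ could in principle obstruct an entire family of rotations simultaneously, and ruling this out seems to be where the full strength of the colour degree hypothesis must be invoked.
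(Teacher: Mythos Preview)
The statement you are attempting to prove is Conjecture~\ref{conj:path}, which the paper explicitly presents as an \emph{open conjecture} from~\cite{Lo10}; the paper does not contain a proof of it. Indeed, the very next sentences of the paper read ``If the conjecture is true, then\dots'' and ``In this paper, we prove the following weaker result\dots'', after which only Theorems~\ref{thm:2factor} and~\ref{thm:PCHC2} are established. So there is no ``paper's own proof'' to compare against.

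As for your proposal itself: it is a plan rather than a proof, and you correctly identify the gap yourself. The crucial step---the ``careful double counting'' that is supposed to yield $|S_0|,|S_\ell|\ge k/2+O(1)$ from $\ell<3k/2$---is asserted but not supplied, and this is precisely the heart of the matter. The rotation--extension paradigm you outline is exactly what was used in~\cite{Lo10} to obtain the weaker bound $6\delta^c(G)/5-1$; pushing that argument from $6/5$ to the conjectured $3/2$ is the open problem. Your last paragraph accurately diagnoses why: after a rotation the forbidden colour at the new endpoint changes, so the sets $S_0,S_\ell$ of reachable endpoints are not controlled by a single colour-degree inequality, and one cannot simply iterate. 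The extremal construction in Proposition~2.1 of the paper shows the bound $3/2$ is tight, so any argument must be sharp, and nothing in your sketch indicates how to close the gap between ``at least $k-1$ usable colours at each endpoint'' and ``at least $k/2$ reachable endpoints after iterated rotations''. Until that counting is actually carried out, the proposal remains a restatement of the difficulty rather than a resolution of it.
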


If the conjecture is true, then every edge-coloured graph $G$ with $\delta^c(G) \ge 2 |G|/3$ contains a properly coloured Hamiltonian cycle. 
In this paper, we prove the following weaker result that $\delta^c(G) \ge 2 |G|/3$ implies the existence of a properly coloured $2$-factor in~$G$. 

\begin{thm} \label{thm:2factor}
Every edge-coloured graph $G$ with $\delta^c(G) \ge 2|G|/3$ contains a properly coloured $2$-factor.
\end{thm}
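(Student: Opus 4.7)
The plan is to recast finding a properly coloured $2$-factor of $G$ as a perfect matching problem in an auxiliary bipartite graph $B'$, and then to verify Hall's condition using $\delta^c(G) \ge 2n/3$. Start with the bipartite double cover $B$ of $G$, namely $B = (V^+, V^-)$ with $V^\pm$ copies of $V(G)$ and $u^+ v^- \in E(B)$ iff $uv \in E(G)$. A perfect matching $M$ of $B$ encodes a permutation $\pi$ of $V(G)$ with $v\pi(v) \in E(G)$ for all $v$, and hence a spanning $2$-regular subgraph of $G$ provided $\pi$ has no $2$-cycles; this last condition amounts to requiring that $M$ contain at most one of $\{u^+v^-, v^+u^-\}$ for each $uv \in E(G)$, and any residual $2$-cycles can be broken by a local alternating-path swap.

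To enforce proper colouring, I would refine $B$ by blowing up each $v^\pm$ into a ``colour gadget'': replace $v^+$ by vertices $\{(v,c)^+ : c \in C(v)\}$ (one per colour $c$ appearing at $v$), and similarly for $v^-$, with $(v,c)^+(u,c)^-$ an edge of $B'$ iff $uv \in E(G)$ has colour $c$. Adding a small ``forbidden pairs'' gadget inside each vertex, which enforces that the colour chosen at $v^+$ differs from the colour chosen at $v^-$, makes the perfect matchings of $B'$ biject with properly coloured $2$-factors of $G$ (modulo the $2$-cycle cleanup).

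The crux is verifying Hall's condition $|N_{B'}(S)| \ge |S|$ for every $S$ in one part of $B'$. For ``typical'' sets $S$ the colour degree hypothesis yields many neighbours per gadget and a direct counting estimate suffices. The main obstacle, as usual for problems with a density threshold, is the ``near-extremal'' regime, where $S$ concentrates on a few vertices of $G$ or on a few colour classes so that the neighbourhood can shrink. Here the threshold $2n/3$ must be used sharply: the tight example---a balanced tripartition of $V(G)$ with three monochromatic bipartite colour classes between the parts---indicates precisely which configurations must be ruled out, so I expect the proof to split into a ``typical'' regime handled uniformly by Hall counting and a ``near-extremal'' regime analysed structurally. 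An alternative, more hands-on plan would take a maximum properly coloured spanning subgraph with $\Delta\le 2$ and apply P\'osa-style rotations, but this approach seems to face the same bottleneck on near-extremal configurations.
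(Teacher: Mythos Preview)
Your proposal is a sketch with several genuine gaps, not a proof.

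\textbf{The gadget is underspecified and, as written, broken.} Blowing up $v^\pm$ into $\{(v,c)^\pm : c \in C(v)\}$ means a perfect matching of $B'$ must saturate \emph{every} colour copy, not select a single one; you would need additional internal dummy vertices to absorb the unused copies. You gesture at a ``forbidden-pairs gadget'' but do not construct it, and it is not clear that a \emph{bipartite} gadget encoding ``the colour chosen at $v^+$ differs from that chosen at $v^-$'' exists while keeping the bijection with properly coloured $2$-factors exact; if it does not, you are forced into general matching and Tutte's condition, which is harder still to verify from $\delta^c$. Incidentally, once the colour constraint is enforced, a $2$-cycle $\pi(u)=v,\ \pi(v)=u$ would make the incoming and outgoing edges at $u$ coincide and hence share a colour, so $2$-cycles are automatically excluded; your ``cleanup'' step is misplaced rather than needed.

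\textbf{The main work is entirely missing.} Even granting a correct reduction, verifying Hall's condition is the whole content of the proof, and you do not attempt it beyond saying the argument ``should split'' into two regimes. Your proposed tight example is also wrong: a balanced tripartition with three monochromatic cross-classes has $\delta^c(G)=2$, nowhere near $2n/3$. The actual extremal construction is a rainbow clique $X$ of size~$\delta$ together with an independent set $Y$, where each $x_i\in X$ is joined to all of $Y$ in a single new colour~$c_i$. Misidentifying the extremal family means any ``near-extremal'' structural analysis you might carry out has no target.

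For comparison, the paper's argument is direct and in fact closer to the alternative you dismiss. One takes a vertex-maximal union $H$ of vertex-disjoint properly coloured cycles, fixes $x\notin V(H)$, and selects a colour neighbourhood $N^c(x)$ according to six prioritised rules depending on how each candidate neighbour sits relative to~$H$. Two structural claims about an auxiliary graph on (shifted) neighbours of $x$, combined with an averaging argument, yield $n-\delta \ge (\delta-1)/2 + 1$ and hence $\delta^c(G) < 2n/3$. The threshold arises from this counting inequality, not from any Hall-type defect estimate.
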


We also show that if $\delta^c(G) \ge (2 /3 + \eps)|G|$ and $|G|$ is large enough, then $G$ does indeed contain a properly coloured Hamiltonian cycle. 
Furthermore, $G$ contains a properly coloured cycle of length $\ell$ for all $3 \le \ell \le |G|$.

\begin{thm} \label{thm:PCHC}
For any $\eps >0$, there exists an integer $n_0$ such that every edge-coloured graph $G$ with $\delta^c(G) \ge (2/3+ \eps) |G| $ and $|G| \ge n_0$ contains a properly coloured cycle of length $\ell$ for all $3 \le \ell \le |G|$.
\end{thm}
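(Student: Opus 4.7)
The plan is to use Theorem~\ref{thm:2factor} as the engine: first upgrade a properly coloured $2$-factor to a properly coloured Hamiltonian cycle, then extract properly coloured cycles of every length; the $\eps n$ slack in the colour degree is what makes both upgrades possible. Concretely, apply Theorem~\ref{thm:2factor} to obtain a properly coloured $2$-factor $F = C_1 \cup \cdots \cup C_k$. If $k \ge 2$, I would merge two cycles into one: pick edges $uv \in E(C_i)$ and $u'v' \in E(C_j)$, and try to replace them with the crossing pair $uu', vv'$ (or $uv', u'v$). The merge preserves proper colouring as long as the new edges avoid the colours of the adjacent $F$-edges at their endpoints, which rules out only a bounded number of colours at each of the four vertices. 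Since $(2/3+\eps)n$ distinct colours are available at every vertex, a double-counting argument over pairs in $E(C_i) \times E(C_j)$ should guarantee a valid swap once $n$ is large enough. Iterating reduces $k$ to $1$, yielding a properly coloured Hamiltonian cycle $H = v_1 v_2 \cdots v_n v_1$.

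For the second step, given $H$ and $3 \le \ell < n$, I would find a properly coloured cycle of length $\ell$ by chord insertion. For each $i$, the subpath $P_i = v_i v_{i+1} \cdots v_{i+\ell-1}$ of $H$ (indices mod $n$) is already properly coloured, and closing it with the chord $v_i v_{i+\ell-1}$ yields a PC cycle of length $\ell$ provided that chord exists in $G$ and its colour avoids $c(v_iv_{i+1})$ and $c(v_{i+\ell-2}v_{i+\ell-1})$. Summing over the $n$ candidate indices $i$, the colour degree bound $(2/3+\eps)n$ forces many of these candidate chords to be simultaneously present and well-coloured. For any length $\ell$ where this direct chord argument is too fragile --- typically very small $\ell$ or $\ell$ very close to $n$ --- I would instead apply Theorem~\ref{thm:2factor} to a modified auxiliary graph (for example, contracting a carefully chosen PC subpath of $H$ into a single vertex and recolouring incident edges, or deleting a small vertex set), so that a PC $2$-factor of the auxiliary graph lifts to a PC cycle of the required length in~$G$.

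The main obstacle is the second step: pancyclicity is strictly stronger than Hamiltonicity, and handling all $\ell$ uniformly is delicate. The chord-counting argument can fail for specific $\ell$ when the colour pattern along $H$ aligns adversarially; for those cases one would probably need a rotation/absorption-type argument to locally adjust the cycle length, or a structural analysis showing that such an adversarial colouring already violates $\delta^c(G) \ge (2/3+\eps)n$. A secondary difficulty lies in Step~1 when one of the components $C_i$ is very short (close to a triangle), since then the merging argument has less room to manoeuvre and the double-counting has to be tightened in that regime. I would expect the argument to require $n_0$ large enough to absorb both these adversarial configurations and a small number of $\ell$-specific bad cases via the $\eps n$ slack.
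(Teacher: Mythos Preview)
Your plan is a genuinely different route from the paper's, and it has a concrete gap in Step~1.

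The merging-by-edge-swap argument breaks down exactly where you flag it as a ``secondary difficulty'': when a component of the $2$-factor is short. Theorem~\ref{thm:2factor} gives no lower bound on cycle lengths---the $2$-factor can consist of $n/3$ triangles---and for two triangles $C_i,C_j$ there are only $O(1)$ candidate edge pairs $(uv,u'v')$ to swap. The colour-degree hypothesis lets each vertex miss up to $(1/3-\eps)n$ others, so it is entirely consistent with $\delta^c(G)\ge(2/3+\eps)n$ that, say, no vertex of $C_i$ is adjacent to any vertex of $C_j$; then no swap is even available, let alone properly coloured. Your double-counting over $E(C_i)\times E(C_j)$ only bites once $|C_i|\cdot|C_j|$ is large, and nothing in your plan forces that. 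The paper confronts exactly this issue: it does not try to merge the $2$-factor of Theorem~\ref{thm:2factor} directly, but instead proves a separate and substantially harder lemma (Lemma~\ref{lma:2factor1}) guaranteeing a properly coloured $2$-factor in which every cycle has length at least $\eps n/2$, so that the number of components is $O(1/\eps)$.

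Even granting a properly coloured Hamiltonian cycle, your Step~2 is also a real gap, as you yourself note. The chord-insertion count you sketch does not obviously close: for a fixed $\ell$ you are testing $n$ specific pairs $(v_i,v_{i+\ell-1})$, and the constraints (``edge present'' plus two forbidden colours, one at each end) are correlated along the cycle in a way that a naive sum does not control. The paper avoids this altogether by using the absorption technique: it first builds a short ``absorbing'' cycle $C$ (Lemma~\ref{lma:abscycle}) with the property that any small collection of vertex-disjoint properly coloured paths outside $C$ can be spliced into $C$ to form a single properly coloured cycle. Pancyclicity then falls out uniformly: cover $V(G)\setminus V(C)$ by $O(1/\eps)$ properly coloured paths (via Lemma~\ref{lma:2factor1}), truncate them to leave exactly $\ell-|C|$ vertices, and absorb. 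Short lengths $3\le\ell\le 2\eps n/3$ are handled separately by a greedy path plus a single connecting lemma (Lemma~\ref{lma:ifar}). So the paper never needs to argue pancyclicity from a Hamiltonian cycle at all.
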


Given a subgraph $H$ in $G$, we write $G -H$ for the (edge-coloured) subgraph obtained from $G$ by deleting all edges in~$H$.
For an edge-coloured graph~$G$, let \emph{$\delta_1^c(G)$} be the minimum $\delta(G - H)$ over all monochromatic subgraphs~$H$ in~$G$.
Note that $\delta^c_1(G) \ge \delta^c(G) - 1$.
Note that for an edge-coloured complete graph $K_n^c$, we have $\delta^c_1(K_n^c) + \Dmon(K_n^c) = n-1$.
We prove the following stronger statement, which implies Theorem~\ref{thm:PCHC}.

\begin{thm} \label{thm:PCHC2}
For any $\eps >0$, there exists an integer $n_0$ such that every edge-coloured graph $G$ with $\delta^c_1(G) \ge (2/3+ \eps) |G| $ and $|G| \ge n_0$ contains a properly coloured cycle of length $\ell$ for all $3 \le \ell \le |G|$.
\end{thm}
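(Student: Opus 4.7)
My plan is two-phase: first upgrade a properly coloured $2$-factor to a properly coloured Hamiltonian cycle $C$, and then use chord-swaps on $C$ to extract a properly coloured cycle of every length $3 \le \ell \le n$. Throughout, write $n = |G|$ and note the following consequence of the hypothesis: since $\delta_1^c(G) \ge (2/3+\eps)n$, every monochromatic subgraph $H$ of $G$ has $\Delta(H) \le n - 1 - \delta_1^c(G) \le (1/3-\eps)n$, so at each vertex $x$ there are at least
\[
\delta_1^c(G) - 2(1/3-\eps)n \ge 3\eps n
\]
edges whose colour lies outside any prescribed set of two colours. This slack drives every subsequent step.

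For the first phase, I would apply Theorem~\ref{thm:2factor} (after checking that its proof in fact only uses the weaker bound $\delta_1^c(G) \ge 2n/3$) to obtain a properly coloured $2$-factor $F = C_1 \cup \dots \cup C_t$. If $t \ge 2$, I would merge two components $C_i, C_j$ as follows. Pick an edge $u_1u_2 \in C_i$, and search over edges $v_1v_2 \in C_j$ for a pair such that either the swap $(u_1u_2, v_1v_2) \mapsto (u_1v_2, u_2v_1)$ or the swap $(u_1u_2, v_1v_2) \mapsto (u_1v_1, u_2v_2)$ yields two edges of $G$ whose colours avoid the four $F$-neighbour colours at $\{u_1,u_2,v_1,v_2\}$ and each other. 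A pigeonhole over $E(C_j)$ combined with the $3\eps n$-slack bound at $u_1$ and $u_2$ delivers such a pair, which decreases $t$ by one; iterating yields a properly coloured Hamiltonian cycle $C = w_1 w_2 \dots w_n w_1$.

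In the second phase, to produce a properly coloured $\ell$-cycle I would use the chord $w_iw_{i+\ell-1}$: when it is an edge of $G$ whose colour differs from $c(w_iw_{i+1})$ and $c(w_{i+\ell-2}w_{i+\ell-1})$, we obtain a properly coloured cycle of length $\ell$. Averaging over the $n$ rotations of a fixed $\ell$, the $3\eps n$-slack guarantees such a valid $i$ whenever $\ell$ lies in a bulk range such as $\eps n \le \ell \le (1-\eps)n$. The boundary lengths $\ell \in \{3,4\}$ are handled directly from the colour-degree hypothesis at a single vertex (producing a properly coloured triangle resp.\ $C_4$), and $\ell$ close to $n$ by combining the chord-swap with one or two further local reroutings of $C$.

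The main obstacle is the coordinated swap in Phase~1: one must produce \emph{two} new edges simultaneously with prescribed endpoints and mutually compatible colours. A crude union bound consumes all of the $3\eps n$ slack, so when $G$ is close to the extremal example for the $2n/3$ threshold the naive argument can fail, and one expects to need a stability / extremal-case dichotomy: treat near-extremal $G$ separately by exploiting its rigid tripartite-like structure, and in the non-extremal case push through with the genuine $\eps$-slack. A subtler issue in Phase~2 is that the chord argument on a single Hamiltonian cycle may fail when non-edges of $G$ are badly clustered along $C$; to handle this one should either rotate $C$ (P\'osa-style) before chord-swapping or combine two chord-swaps at once.
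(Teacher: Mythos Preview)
Your proposal is a genuinely different strategy from the paper's, but both phases contain real gaps that you yourself flag without resolving.

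In Phase~1 the merging step does not go through as stated. Theorem~\ref{thm:2factor} may return a $2$-factor made up of $n/3$ triangles, so when you ``pigeonhole over $E(C_j)$'' you are pigeonholing over three edges. For a fixed edge $u_1u_2\in C_i$ and a candidate edge $v_1v_2\in C_j$, you need \emph{both} $u_1v_1$ and $u_2v_2$ (or $u_1v_2$ and $u_2v_1$) to be edges of $G$ with colours avoiding two prescribed colours each; the $3\eps n$ slack counts neighbours of $u_1$ globally, not inside a $3$-element set, so there is no reason any of the $18$ possible swaps between two triangles succeeds. The stability dichotomy you mention would have to be developed from scratch and is not a minor patch. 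The paper sidesteps this entirely: it first proves a strengthened $2$-factor lemma (Lemma~\ref{lma:2factor1}) giving cycles of length at least $\eps n/6$, hence only $O(1/\eps)$ of them, and then, rather than merging by swaps, builds a short \emph{absorbing} cycle $C$ in advance (Lemma~\ref{lma:abscycle}) which can swallow any bounded collection of vertex-disjoint properly coloured paths.

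Phase~2 has the analogous problem. For a fixed $\ell$, the $n$ potential chords $w_iw_{i+\ell-1}$ form a perfect matching on $V(G)$ (or a single $n$-cycle of pairs), and since each vertex is allowed up to $(1/3-\eps)n$ non-neighbours, it is entirely possible that every one of these $n$ specified pairs is a non-edge; no averaging over rotations rescues this. The paper instead handles short $\ell$ (up to $2\eps n/3$) by greedily growing a properly coloured path and closing it with a single connecting lemma (Lemma~\ref{lma:ifar}), and handles all longer $\ell$ by trimming the path cover of $G\setminus V(C)$ down to exactly $\ell-|C|$ vertices and absorbing into $C$. Thus the paper never needs a chord on a Hamiltonian cycle at all.
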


We now outline the proof of Theorem~\ref{thm:PCHC2} for the case when $\ell = |G|$, i.e. the existence of a properly coloured Hamiltonian cycle.
The proof adapts the absorption technique introduced by R\"odl, Ruci\'{n}ski and Szemer\'{e}di~\cite{MR2399020}, which was used to tackle Hamiltonicity problems in hypergraphs.
The proof is divided into two main steps.
In the first step, we find (by Lemma~\ref{lma:abscycle}) a small `absorbing cycle' $C$ in $G$.
The absorbing cycle $C$ has the property that, given any small number of vertex-disjoint properly coloured paths $P_1, P_2, \dots, P_k$ in $G$ with $V(C) \cap V(P_i) = \emptyset$ for each $i \le k$, there exists a properly coloured cycle $C'$ in $G$ with $V(C') = V(C) \cup \bigcup_{1 \le i \le k} V(P_i)$.
Thus, we have reduced the problem to covering the vertex set $V(G) \setminus V(C)$ with small number of vertex-disjoint properly coloured paths.
We remove the vertices of $C$ from $G$ and let $G'$ be the resulting graph.
Since $C$ is small, we may assume that $\delta^c_1(G') \ge (2/3 + \eps') |G'|$ for some small $\eps' >0$.
Then we find a properly coloured $2$-factor in $G'$ using Lemma~\ref{lma:2factor1} such that every cycle has length at least $\eps' |G'|/2$.
(Although Theorem~\ref{thm:2factor} also implies that $G'$ contains a properly coloured $2$-factor, there is no bound on the lengths of the cycles.)
Hence $V(G')$ can be covered by at most $2 / \eps'$ vertex-disjoint properly coloured paths $P_1, P_2, \dots, P_k$.
By the `absorbing' property of $C$, there is a properly coloured cycle $C'$ with $V(C') = V(C) \cup \bigcup_{1 \le i \le k} V(P_i) = V(G)$.
Therefore, $C'$ is a properly coloured Hamiltonian cycle as required.

The paper is organised as follows.
In the next section, we set up some basic notation and give the extremal example for Conjecture~\ref{conj:path}.
Section~\ref{sec:2fact} is devoted to finding properly coloured $2$-factors and contains the proof of Theorem~\ref{thm:2factor}.
The absorbing cycle is constructed in Section~\ref{sec:abscycle}.
Finally, Theorem~\ref{thm:PCHC2} is proved in Section~\ref{sec:proof}.


\section{Notation and extremal example}

Throughout this paper, unless stated otherwise, $G$ will be assumed to be an edge-coloured graph with edge-colouring~$c$. 
For an edge $xy$, we denote its colour by $c(xy)$.
For $v \in V(G)$, we denote by $N_G(v)$ the neighbourhood of~$v$ in $G$.
If the graph $G$ is clear from the context, we omit the subscript.
Let $U \subseteq V(G)$.
We write $G[U]$ for the (edge-coloured) subgraph of $G$ induced by~$U$.
We also write $G \setminus U$ for the subgraph obtained from $G$ by deleting all vertices in~$U$, i.e. $G \setminus U = G[V(G) \setminus U]$.
For edge-disjoint (edge-coloured) graphs $G$ and $H$, we denote by $G + H$ the union of $G$ and~$H$.
Further, we write $G - H +H'$ to mean $(G - H ) + H'$.

Let $U,W \subseteq V(G)$ not necessarily disjoint.
Whenever we define an auxiliary bipartite graph $H$ with vertex classes $U$ and~$W$, we mean $H$ has vertex classes $U'$ and $W'$, where $U'$ is a copy of~$U$ and $W'$ is a copy of~$W$.
Hence, $U$ and $W$ are considered to be disjoint in~$H$.
Given an edge $uw$ in~$H$, we say $u \in U$ and $w \in W$ to mean $u \in U'$ and $w \in W'$.

In this paper, every path will be assumed to be directed.
Hence, the paths $v_1 v_2 \dots v_{\ell}$ and $v_{\ell} v_{\ell-1} \dots v_1$ are considered to be different for $\ell \ge 2$.
We also allow a single vertex to be a (trivial) path.
Given vertex-disjoint paths $P_1, \dots,P_s$, we define the path $P_1 \dots P_s$ to be the concatenation of $P_1, \dots, P_s$.
For example, if $P= v_1  \dots v_{\ell}$ and $Q = w_1 \dots w_{\ell'}$ are vertex-disjoint paths and $x$ is a vertex not in $V(P) \cup V(Q)$, then $P xQ$ denotes the path $v_1 \dots v_{\ell} x w_1 \dots w_{\ell'}$.

Let $H$ be a union of vertex-disjoint directed cycles.
For each $y \in V(H)$, let $C_y$ be the cycle in~$H$ that contains~$y$.
Denote by $y_+$ and $y_-$ the successor and ancestor of $y$ in $C_y$ respectively.
Write $y_{--}$ for $(y_-)_-$ and $y_{++}$ for $(y_+)_+$. 
For distinct vertices $y,z$ in a cycle $C$ in $H$, define $yC^+ z$ and $y C^- z$ to be the paths $y y_+ \dots z_- z$ and $y y_- \dots z_+ z$ in $C$ respectively.
If $y=z$, then set $yC^+y = y = yC^-y$.

In~\cite{Lo10}, the author gave a construction to show that Conjecture~\ref{conj:path} is best possible for infinitely many values of $|G|$ and $\delta^c(G)$.
The same construction also shows that Theorem~\ref{thm:2factor} is best possible.
We include it here for completeness.

\begin{prp}
For $n,\delta \in \mathbb{N}$ with $\delta < 2n/3$, there exists a connected edge-coloured graph $G$ on $n$ vertices with $\delta^c(G) = \delta$ which does not contain a union of vertex-disjoint properly coloured cycles spanning more than $3 \delta^c(G)/2$ vertices.
In particular, $G$ does not contain a properly coloured $2$-factor or a properly coloured Hamiltonian cycle.
Moreover, all properly coloured paths in $G$ have length at most $3 \delta^c(G)/2$.
\end{prp}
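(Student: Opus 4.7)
My plan is to exhibit the extremal graph explicitly. Let $t = \delta$, and partition $V(G) = U \cup W$ with $|U| = t$ and $|W| = n - t$; the inequality $\delta < 2n/3$ gives $|W| > t/2$. Take as edges of $G$ all pairs inside $U$ together with all pairs between $U$ and $W$, so $W$ is an independent set but $G$ is connected. Fix a proper edge-colouring of $G[U] = K_t$ drawn from a palette $\mathcal{P}_1$, and fix pairwise distinct colours $\alpha_u$ ($u \in U$) from a disjoint palette $\mathcal{P}_2$; then assign each edge $uw$ with $u \in U$, $w \in W$ the colour $\alpha_u$.

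One checks immediately that $\delta^c(G) = \delta$: each $u \in U$ sees $t-1$ distinct colours of $\mathcal{P}_1$ on its edges inside $U$, plus the single colour $\alpha_u$ on all its edges into $W$; each $w \in W$ sees exactly the $t$ distinct colours $\{\alpha_u : u \in U\}$. The decisive structural observation is that a properly coloured walk cannot contain the sandwich $w, u, w'$ with $w, w' \in W$, because the two edges incident to $u$ both have colour $\alpha_u$. Combined with the independence of $W$ (forbidding two consecutive $W$-vertices), this forces every maximal block of $U$-vertices in a properly coloured path or cycle which is flanked on both sides by $W$-vertices to have at least two vertices.

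Writing $m$ and $k$ for the numbers of $U$- and $W$-vertices used in such a path (or cycle), and noting $m \le |U| = \delta$, one deduces $m \ge 2(k - 1)$ in a path (the $k-1$ internal gaps between successive $W$-vertices each absorb at least two $U$-vertices) and $m \ge 2k$ in a cycle (all gaps are internal). Hence a properly coloured path has at most $m + k - 1 \le 3\delta/2$ edges, and in any vertex-disjoint union of properly coloured cycles we have $\sum m \le \delta$ and $\sum k \le (\sum m)/2 \le \delta/2$, so the union spans at most $3\delta/2$ vertices. Since $n > 3\delta/2$, this rules out a properly coloured $2$-factor or Hamilton cycle. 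The main obstacle is arranging the colouring so that the sandwich constraint is simultaneously tight for paths and for disjoint cycle unions; the construction above achieves this by collapsing all edges from a fixed $u \in U$ into $W$ onto the single colour $\alpha_u$, which saturates $d^c(u)$ and forces each $U$-vertex to appear at least twice between successive $W$-visits.
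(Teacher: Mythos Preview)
Your construction and argument are correct and coincide with the paper's own proof: the paper takes the same split $X\cup Y$ with $|X|=\delta$, makes $Y$ independent, colours each $x_i$--$Y$ fan monochromatically with a private colour~$c_i$, and derives the same $|X\cap V(C)|\ge 2|Y\cap V(C)|$ inequality for properly coloured cycles (and the analogous path bound). The only cosmetic difference is that the paper colours the clique on $X$ rainbow rather than merely properly, which is immaterial since either choice gives $d^c(u)=\delta$ for $u\in X$ and the sandwich obstruction depends only on the fan colours~$\alpha_u$.
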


\begin{proof}
Let $n,\delta \in \mathbb{N}$ with $3 \delta < 2n $.
Let $X = \{x_1, x_2,\dots, x_{\delta} \}$ and $Y$ be vertex sets with $|Y| = n- \delta$ and $X \cap Y = \emptyset$.
Define $G$ to be the edge-coloured graph on the vertex set $X \cup Y$ as follows.
Let $G[X]$ be a rainbow complete graph and let $G[Y]$ be a set of independent vertices.
For each $1 \le i \le \delta$, add an edge of new colour~$c_i$ between $x_i$ and $y$ for each $y \in Y$.
By our construction, $d^c(v) = |X| = \delta$ for all $v \in V(G)$ and so $\delta^c(G) = \delta$.

Let $C$ be a properly coloured cycle in $G$.
Arbitrarily orient $C$ into a directed cycle. 
Note that every vertex $y \in V(C) \cap Y$ must be immediately followed by two consecutive vertices in $X$, so $|X \cap V(C)| \ge 2 |Y \cap V(C)|$.
Therefore, if $\mathcal{C}$ is a collection of vertex-disjoint properly coloured cycles in $G$, then $\mathcal{C}$ spans at most $|X| + |X|/2  = 3 \delta / 2 $ vertices.
The `moreover' statement is proved by a similar argument.
\end{proof}


\section{Properly coloured $2$-factors} \label{sec:2fact}

First we prove Theorem~\ref{thm:2factor}; that is, every edge-coloured graph with $\delta^c(G) \ge 2|G|/3$ contains a properly coloured $2$-factor.
We now present a sketch of the proof.
Suppose that $G$ is an edge-coloured graph on $n$ vertices.
Let $H$ be a properly coloured subgraph in $G$ consisting of vertex-disjoint cycles such that $|H|$ is maximal.
If $V(H) = V(G)$, then we are done.
Hence we may assume that there exists $x \in V(G) \setminus V(H)$.
Define a \emph{colour neighbourhood} $N^c(x)$ of $x$ in $G$ to be a maximal subset of neighbours of $x$ in $G$ such that $c(x y) \ne c(x z)$ for all distinct $y,z \in N^c(x)$.
(The choice of $N^c(x)$ will be specified later.)
Assume that we are in the ideal case that $N^c(x) \cap V(H) = \emptyset$.
If $yz$ is an edge and $c(xy) \ne c(yz) \ne c(xz)$ for some distinct $y,z \in N^c(x)$, then $xyzx$ is a properly coloured triangle and so $H + xyzx$ contradicts the maximality of $|H|$. 
Hence, we may assume that for every $y,z \in N^c(x)$, $yz$ is not an edge, or $c(zy) = c(xy)$, or $c(zy) = c(xz)$.
By an averaging argument, there exists a vertex $y_0 \in N^c(x)$ such that the number of $z \in N^c(x) \setminus \{ y_0 \}$ such that either $y_0 z \notin E(G)$ or $c(y_0 z) = c(x y_0)$ is at least $(|N^c(x)|-1)/2 \ge ( \delta^c(G) - 1 ) / 2$.
Recall that there are at least $\delta^c(G)-1$ neighbours $v$ of $y_0$ with $c(y_0 v) \ne c(x y_0)$.
This means that $n = |V(G)| \ge (\delta^c(G) -1 )/2 + (\delta^c(G) -1 ) + |\{x, y_0\}| > 3 \delta^c(G)/2$ and so $\delta^c(G) < 2n/3$.

\begin{proof}[Proof of Theorem~$\ref{thm:2factor}$]
Let $G$ be an edge-coloured graph on $n$ vertices with edge-colouring $c$.
Set $\delta = \delta^c(G)$.
Suppose that $G$ does not contain a properly coloured $2$-factor.
We will show that $\delta < 2n/3$.
This is trivial if $n \le 2$, so we may assume that $n \ge 3$.
By deleting edges in $G$, we may assume that $G$ is edge-minimal, that is, any additional edge deletion would lead to a decrease in $d^c(v)$ for some vertex $v$ in~$G$.
Suppose that $G'$ is a monochromatic subgraph of $G$, which is not isomorphic to a disjoint union of stars.
Then there exists an edge $uv$ in $G'$ with $d_{G'}(u), d_{G'}(v) \ge 2$.
Set $G'' = G - uv$.
Note that $d^c_{G''}(x) = d^c_{G}(x)$ for all vertices $x \in V(G)$.
This contradicts the edge-minimality of~$G$. 
Therefore, every monochromatic subgraph $G'$ in $G$ is a disjoint union of stars.
Let $H$ be a properly coloured subgraph in $G$ consisting of vertex-disjoint cycles such that $|H|$ is maximal.
Note that $|H| < n$.
Arbitrarily orient each cycle in~$H$ into a directed cycle.

Fix $x \in V(G) \setminus V(H)$.
We will choose $N^c(x)$ to be a maximal subset of neighbours of $x$ in $G$ such that $c(x y_1) \ne c(x y_2)$ for all distinct $y_1, y_2 \in N^c(x)$.
Note that $|N^c(x)| = d^c(x)$.
Obtain $N^c(x)$ as follows.
If there are at least two vertices $y_1$ and $y_2$ such that $c( x y_1) =c( x y_2 )$, then we choose $y \in \{ y_1, y_2 \}$ to be in $N^c(x)$ according to the following order of preferences (if there are still at least two choices for $y$, pick one arbitrarily):
\begin{itemize}
	\item[(a)] $y \notin V(H)$;
	\item[(b)] $y \in V(H)$ and $c(xy) = c(yy_+)$;
	\item[(c)] $y \in V(H)$ and $c(xy) = c(yy_-)$;
	\item[(d)] $y \in V(H)$ and $xy_+ \notin E(G)$;
	\item[(e)] $y \in V(H)$ and $c(xy) \ne c(xy_+)$;
	\item[(f)] $y \in V(H)$ and $c(xy) = c(xy_+)$.
\end{itemize}
Note that if $y \in N^c(x)$ satisfies property~(f), then $c(xy') = c(xy)$ for every vertex $y'$ in $C_y$.
Define $N^c_{{\rm (a)}}(x)$ to be the set of vertices in $N^c(x)$ chosen due to~(a), and define $N^c_{{\rm (b)}}(x), \dots, N^c_{{\rm (f)}}(x)$ similarly.
Next, set
\begin{align*}
	W & = N^c_{{\rm (a)}}(x), \\
	R' & = \{y_+ : y \in N^c_{{\rm (b)}}(x) \cup N^c_{{\rm (d)}}(x) \cup N^c_{{\rm (e)}}(x) \cup  N^c_{{\rm (f)}}(x) \},\\
	S' & = \{y_- : y \in N^c_{{\rm (c)}}(x) \}, \\
	R & = R' \setminus S', \qquad
	S =  S' \setminus R', \qquad
	T =  R' \cap S'.
\end{align*}
We have
\begin{align}
|R| + |S| + 2|T| + |W| = |N^c(x)| \ge \delta.	\label{eqn:2-factor:r+s+2t+w}
\end{align}
Note that $R' \cup S' \subseteq V(H)$.
If $y \in R'$, then $y_- \in N^c_{{\rm (b)}}(x) \cup N^c_{{\rm (d)}}(x) \cup N^c_{{\rm (e)}}(x) \cup  N^c_{{\rm (f)}}(x) \subseteq N^c(x)$.
For $y_- \in N^c_{{\rm (b)}}(x) $, we have $c(y_- y_{--}) \ne c(y_- y ) = c(x y_-)$ since $C_{y_-}$ is a properly coloured cycle.
Also, for $y_- \in N^c_{{\rm (d)}}(x) \cup N^c_{{\rm (e)}}(x) \cup  N^c_{{\rm (f)}}(x) $, we have $c(x y_-) \ne c(y_- y_{--})$ since $y_- \notin N^c_{\rm (c)}(x)$.
Thus, together with the definitions of $S'$ and~$N^{c}_{\rm (c)}(x)$, we deduce that
\begin{itemize}
	\item[(i)]	if $y \in R'$, then $y_- \in N^c(x)$ and $c(y_- y_{--}) \ne c(x y_-)$, and 
	\item[(ii)] if $y \in S'$, then $y_+ \in N^c(x)$ and $c(y_+ y_{++} ) \ne c(y_+y) = c(x y_+)$.
\end{itemize}

Let $F$ be the edge-coloured subgraph of $G$ induced by $W \cup R \cup S \cup T$.
For $y \in V(F)$, define the vertex colour $c(y)$ to be 
\begin{align*}
	c(y) = \begin{cases}
	c(xy) & \textrm{if } y \in W, \\
	c(y y_+)  & \textrm{if } y \in R, \\
	c(y y_-)  & \textrm{if } y \in S, \\
	c_0	& \textrm{if } y \in T,
	\end{cases}
\end{align*}
where $c_0$ is a new colour that does not appear in $G$.
The following claim concerns the colours of the edges of $F$.

\begin{clm} \label{clm:E(F)}
If $yz \in E(F)$, then $c(yz) = c(y)$ or $c(yz) = c(z)$.
In particular, $T$ is an independent set in $G$.
\end{clm}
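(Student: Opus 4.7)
The plan is to argue by contradiction using the maximality of~$|H|$: assume $yz \in E(F)$ with $c(yz) \notin \{c(y), c(z)\}$, and produce a properly coloured union of vertex-disjoint cycles $H^*$ in~$G$ with $|H^*| > |H|$. I would split into cases by the memberships of $y$ and $z$ in $\{W, R, S, T\}$, and in each case use properties~(i) and~(ii) together with the failed colour equation to splice~$x$ (and any $W$-vertex among $\{y,z\}$) into the cycle(s) of~$H$ meeting $\{y, z\}$.

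The cleanest case, $y, z \in W$, follows the heuristic sketched before the proof: $y, z \notin V(H)$, the colours $c(xy) = c(y)$ and $c(xz) = c(z)$ are distinct since $y \ne z$ both lie in $N^c(x)$, and the assumption makes $xyzx$ a properly coloured triangle vertex-disjoint from~$H$; then $H + xyzx$ plays the role of~$H^*$. For the remaining cases, whenever $y \in R \cup T$, property~(i) supplies an edge $xy_-$ with $c(xy_-) \ne c(y_- y_{--})$, and whenever $y \in S \cup T$, property~(ii) supplies an edge $xy_+$ with $c(xy_+) \ne c(y_+ y_{++})$; analogous statements hold for~$z$. The edge $yz$ then acts as a bridge through which these $x$-based edges rewire the cycles of~$H$. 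Three representative rewirings: for $y \in W$ and $z \in R$, the cycle $x y z C_z^+ z_- x$ replaces~$C_z$ and gains the two vertices $x, y$; for $y, z \in R$ on a common cycle~$C$ of~$H$, the cycle $x y_- C^- z y C^+ z_- x$ replaces $C$ and gains the single vertex~$x$; for $y, z \in R$ on distinct cycles $C_y, C_z$, the cycle $x y_- C_y^- y z C_z^+ z_- x$ merges $C_y$ and $C_z$ and again gains~$x$. Analogous constructions, obtained by swapping $+$ with $-$ at any vertex lying in $S$ rather than $R$, handle the remaining configurations.

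I expect the main obstacle to be the combinatorial book-keeping of this case analysis --- up to ten membership pairs in $\{W, R, S, T\}^2$, each splitting further according to whether $y$ and $z$ lie on the same cycle of~$H$ --- together with small degeneracies such as $y = z_-$, in which $yz$ is itself an edge of~$H$ and one directly reads off $c(yz) \in \{c(y), c(z)\}$. The properness check inside each newly constructed cycle is short, however: the required colour inequalities at the few junctions follow from the assumption on~$c(yz)$, from properties~(i) and~(ii), from the distinctness of colours among the edges at~$x$ supplied by~$N^c(x)$, and from the fact that $H$ was already properly coloured. Once the main claim is established, the ``in particular'' statement is immediate: any two vertices of~$T$ share the artificial colour~$c_0$, which does not appear in~$G$, so no edge $yz$ between them can satisfy $c(yz) \in \{c(y), c(z)\}$; hence the main claim forbids any such edge, and $T$ is independent in~$G$.
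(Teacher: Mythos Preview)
Your proposal is correct and follows essentially the same approach as the paper: contradiction via the maximality of~$|H|$, a case analysis over memberships in $\{W,R,S,T\}$, and the same explicit rewirings (the paper uses exactly the cycles $xyzx$, $xyzC_z^+z_-x$, $xy_-C^-zyC^+z_-x$, and $xy_-C_y^-yzC_z^+z_-x$ that you name, together with the orientation-flip/``treat $T$ as $R$ or $S$'' reductions you describe). The one point worth flagging is the same-cycle case $y\in R$, $z\in S$: here a single cycle does not suffice, and the paper instead uses the pair $xy_-C^-z_+x$ and $yC^+zy$; your framing already allows $H^*$ to be a union of cycles, so this fits, but the ``swap $+$ with $-$'' heuristic alone will not produce it.
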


\noindent
\textit{Proof of Claim~$\ref{clm:E(F)}$.}
Suppose the claim is false, so there exists an edge $yz \in E(F)$ with $c(yz) \ne c(y)$ and $c(yz) \ne c(z)$.
We will show that there is a properly coloured subgraph $H'$ in $G$ consisting of vertex-disjoint cycles with $V(H') = V(H) \cup \{x,y,z\}$, contradicting the maximality of $|H|$.

If $y,z \in W \subseteq N^c(x) $, then $c(xy) \ne c(xz)$.
Since $c(xy) = c(y) \ne c(yz) \ne c(z) = c(xz)$, then $xyzx$ is a properly edge-coloured triangle in~$G$.
Hence $H' = H + xyzx$ contradicts the maximality of $|H|$.

If $y \in W$ and $z \in R$, then $c(xy) = c(y) \ne c(yz) \ne c(z) = c(zz_+)$.
By~(i) we have $z_- \in N^c(x)$ and $c(x z_-) \ne c(z_- z_{--})$.
Moreover, we have $c(x z_-) \ne c(xy)$ since $y, z_- \in N^c(x)$.
Therefore, $C' = x y z C_z^+ z_- x$ is a properly coloured cycle.
We obtain a contradiction by setting $H' = H - C_z + C'$.
Similarly, if $y \in W$ and $z \in S$, then we obtain a contradiction by considering $C'' = x y z C_z^- z_+ x$ instead of~$C'$.
If $y \in W$ and $z \in T$, then $c(xy) = c(y) \ne c(yz)$.
Since $C_z$ is a properly coloured cycle, we have $c(yz) \ne c(z z_+)$ or $c(yz) \ne c(z z_-)$.
Recall that $T = R' \cap S'$, so $c(x z_-) \ne c(z_- z_{--})$ and $c(x z_+) \ne c(z_+ z_{++})$ by (i) and~(ii).
Also, $y,z_-, z_+ \in N^c(x)$, so $c(xz_-) \ne c(xy) \ne c(xz_+)$.
Hence, $H - C_z + C'$ or $H - C_z + C''$ would imply a contradiction.

Therefore, we may assume that $y,z \in R' \cup S' \subseteq V(H)$.
In order to prove the claim, it is enough to show that there exist at most two vertex-disjoint properly coloured cycles $C'$, $C''$ spanning $\{x\} \cup V(C_y) \cup V(C_z)$ (which would then imply that $H' = (H - C_y - C_z) + C' + C''$ is a union of properly coloured cycles with $|H'| = |H| +1$, a contradiction).

Suppose that $C_y$ and $C_z$ are distinct.
If $y, z \in R$, then by (i) we have 
\begin{align}
y_-, z_- & \in N^c(x), & c(y_- y_{--} ) & \ne c(xy_-), & c(x z_-) & \ne c(z_- z_{--}). \label{eqn:E(F)}
\end{align}
Since $y_-, z_- \in N^c(x)$, we have $c( x y_- ) \ne c( x z_- )$.
Note that $c(y) \ne c(yz) \ne c(z) $, so
\begin{align}
c(y y_+)  \ne c(yz) \ne   c(z z_+) . \label{eqn:E(F)2}
\end{align}
Hence, $C' = x y_- C^{-}_y y z C_z^+ z_- x$ (see Figure~\ref{fig:clm:E(F)}(A)) is a properly coloured cycle with vertex set $V(C_y + C_z) \cup \{ x \}$. 
Notice that $C'$ is a properly coloured cycle if both \eqref{eqn:E(F)} and \eqref{eqn:E(F)2} hold.
If $y,z \in R'$, then \eqref{eqn:E(F)} holds by~(i).
Therefore, we may assume without loss of generality that $y \in S$ or $y \in T$ with $c(y y_+) =  c(yz)$.
Since $T \subseteq S'$ and the cycle $C_y$ is properly coloured, we have $y \in S'$ and $c(y y_-) \ne c(y y_+) = c(yz)$.
By~(ii) and reversing the orientation of~$C_y$, we have $y_- \in N^c(x)$, $c(y_- y_{--} ) \ne c(xy_-)$ and $c(y y_+) \ne  c(yz)$.
Similarly, by reversing the orientation of $C_z$ if necessary, we may assume that both \eqref{eqn:E(F)} and \eqref{eqn:E(F)2} hold and so we derive a contradiction.
\begin{figure}[tbp]
\centering
\subfloat[]{
\includegraphics[scale=0.6]{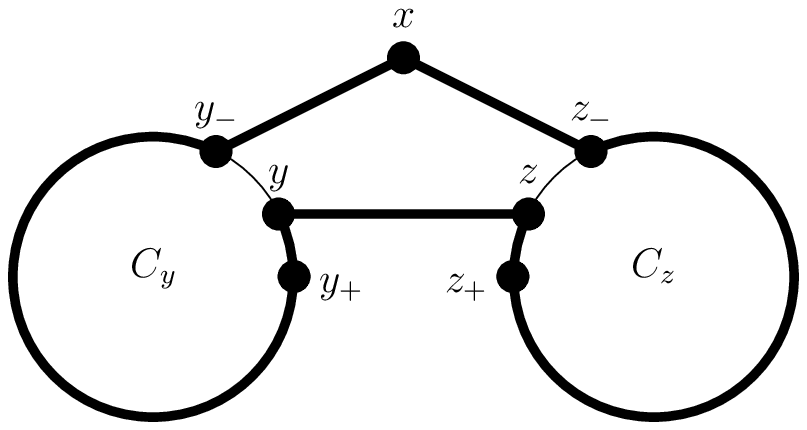}}
\subfloat[]{
\includegraphics[scale=0.6]{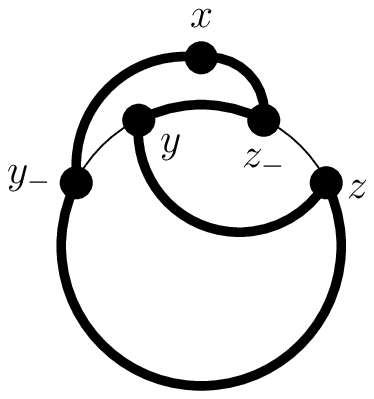}}
\subfloat[]{
\includegraphics[scale=0.6]{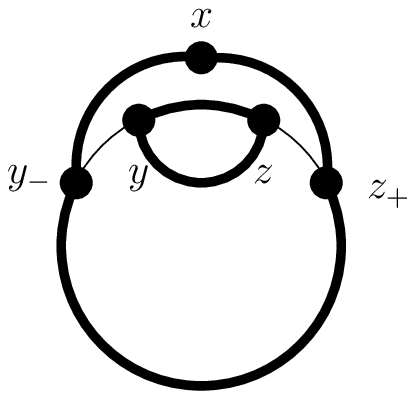}}
\caption{Properly coloured cycles used in the proof of Claim~\ref{clm:E(F)}.}
\label{fig:clm:E(F)}
\end{figure}

Finally, suppose that $C =C_y = C_z$.
If $y,z \in R$, then $c(y y_+) = c(y)  \ne c(yz) \ne c(z) = c(z z_+)$.
In particular, $y_+ \ne z$ and $z_+ \ne y$.
By~(i), $x y_- C^- z y C^+ z_- x$ (see Figure~\ref{fig:clm:E(F)}(B)) is a properly coloured cycle with vertex set $V(C)\cup \{ x \}$, so we are done.
If $y \in R$ and $z \in S$, then $y_- \ne z$, otherwise $x y C^+ z$ is a properly coloured cycle by (i) and~(ii).
However, both $x y_- C^- z_+ x$ and $y C^+ z y$ (see Figure~\ref{fig:clm:E(F)}(C)) are properly coloured cycles spanning $\{x\} \cup V(C)$.
If $y, z \in S$, then by reversing the orientation of the cycle~$C$ we conclude that $y,z \in R$ and so we are done.
If $y \in T$ or $z \in T$, then we apply the following `blindness' argument. 
Suppose that $y \in T$.
Since the cycle $C$ is properly coloured, $c(yz) \ne c(y y_+)$ or $c(yz) \ne c(y y_-)$.
We treat $y$ to be in $R$ if $c(yz) \ne c(y y_+)$, and $y$ to be in $S$ otherwise. 
We apply a similar treatment when $z \in T$.
Hence, we are back in the case when $y, z\in R \cup S$.
This completes the proof of Claim~\ref{clm:E(F)}. 
$\hfill{\blacksquare}$
\medskip

\begin{clm} \label{clm:y'}
For each vertex $y \in V(F)$ there exists a vertex $u = u(y) \notin V(F)$ such that either $yu \notin E(G)$ or $c(yu) = c(y)$.
\end{clm}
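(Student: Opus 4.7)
I will prove the claim by contradiction. Suppose some $y\in V(F)$ admits no valid $u(y)$; equivalently, for every $u\notin V(F)$, $yu\in E(G)$ and $c(yu)\ne c(y)$. Since $x\notin V(F)$, taking $u=x$ forces $xy\in E(G)$ with $c(xy)\ne c(y)$. I will derive a contradiction to either the maximality of $|H|$, the priority order used to select $N^c(x)$, or the edge-minimality of $G$ (which forces every monochromatic subgraph to be a disjoint union of stars), according to the type of $y$.

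The case $y\in W$ is immediate, since $c(xy)=c(y)$ by the definition of $W$ directly contradicts $c(xy)\ne c(y)$. For $y\in R$ (with $y\in S$ symmetric under reversing orientation), write $y=z_+$ with $z=y_-\in N^c_{(b),(d),(e),(f)}(x)$. Case (d) is immediate: $xy=xz_+\notin E(G)$ by definition. In case (e), the cycle $C'=xyC_y^+y_-x$ is properly coloured (combining $c(xy_-)\ne c(xy)$ from (e), $c(xy)\ne c(yy_+)$ from the assumption, and property (i)), so $H-C_y+C'$ contradicts the maximality of $|H|$. In case (b), either $c(xy_-)\ne c(xy)$ and the same cycle works, or $c(xy)=c(xy_-)=c(y_-y)=c(yy_-)$; in the latter case both $x$ and $y$ carry two edges of colour $c(xy)$ (namely $xy,xy_-$ at $x$ and $xy,yy_-$ at $y$), violating the star-forest structure of the colour class $c(xy)$ guaranteed by edge-minimality. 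In case (f) we have $c(xy_-)=c(xy)=:c^*$; because $y_-$ was selected under the lowest-priority rule (f) for colour $c^*$, no vertex of $C_y$ satisfies (a)--(e) for that colour, so no edge of $C_y$ has colour $c^*$ and moreover $y\notin N^c(x)$. This rules out $y_+\in R'$, and $y_+\in S'$ would force $c(y_+y_{++})=c(xy_{++})=c^*$, a contradiction. Hence $y_+\notin V(F)$, and the assumption applied to $u=y_+$ gives $c(yy_+)\ne c(y)$, contradicting $c(yy_+)=c(y)$.

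The case $y\in T$ is the main obstacle, because $c(y)=c_0$ does not appear in $G$, so the condition $c(yu)=c(y)$ is unattainable. Here $y_-\in N^c_{(b),(d),(e),(f)}(x)$ and $y_+\in N^c_{(c)}(x)$, and I consider the two candidate cycles $C_1=xyC_y^+y_-x$ and $C_2=xyC_y^-y_+x$, each of length $|C_y|+1$. Cases (d) and (f) for $y_-$ are eliminated as before: (d) contradicts $xy\in E(G)$, and (f) forces $c(yy_+)=c^*$ via $y_+\in N^c_{(c)}(x)$, contradicting the fact that no edge of $C_y$ has colour $c^*$. In cases (b) and (e), one of $C_1,C_2$ is properly coloured (using properties (i) and (ii) at $y_-$ and $y_+$) unless $c(xy)\in\{c(yy_-),c(yy_+)\}$. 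The sub-case $c(xy)=c(yy_-)$ yields a star-forest violation in the $R$-style manner (via $c(xy_-)=c(yy_-)$ in (b), while in (e) a direct $C_1$ argument suffices). The sub-case $c(xy)=c(yy_+)$, combined with $c(xy_+)=c(yy_+)$ coming from $y_+\in N^c_{(c)}(x)$, again produces two edges of colour $c(xy)$ at both $x$ and $y$, contradicting edge-minimality. The repeated use of the star-forest property of monochromatic subgraphs, afforded by the edge-minimality reduction at the start of the proof, is the key structural input that forces the $T$-case (and the harder sub-cases of the $R$-case) to close.
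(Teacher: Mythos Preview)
Your proof is correct and follows essentially the same approach as the paper: case-split on which rule placed $y_-$ into $N^c(x)$, use the cycle $xyC_y^+y_-x$ (or its reverse) to contradict maximality of $|H|$, invoke the star-forest structure of monochromatic subgraphs from edge-minimality for the delicate sub-cases, and take $u(y)=y_+$ for rule~(f). The chief organisational difference is that the paper first records the conclusions $c(xy)=c(yy_+)$ for $y\in R'\cap N_G(x)$ with $y_-\notin N^c_{(d)}\cup N^c_{(f)}$ and $c(xy)=c(yy_-)$ for $y\in S'\cap N_G(x)$ as standalone facts, and then the $T$-case follows in one line from their incompatibility with $c(yy_+)\ne c(yy_-)$; you instead re-run the case analysis inside $T$, which is slightly longer but equivalent. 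Two small remarks: your phrase ``symmetric under reversing orientation'' for the $S$-case is a mild overstatement, since only rule~(c) feeds $S$ whereas four rules feed $R$ --- but the intended argument (the analogue of your case~(b)) is exactly what the paper does and is valid. Also, in your case~(f) you use that every vertex of $C_y$ is a candidate for colour~$c^*$ (to conclude no edge of $C_y$ has colour $c^*$); this is the content of the remark after the definition of $N^c(x)$ and deserves a one-line inductive justification.
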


\noindent
\textit{Proof of Claim~$\ref{clm:y'}$.}
If $y \in W$, then $c(y) = c(xy)$ and so we are done by setting $u(y) = x$.

If $y \in R'$ with $y_- \in N^c_{{\rm (d)}}(x)$, then $xy$ is not an edge.
Set $u(y) = x$ for $y \in R$ with $y_{-} \in N^c_{(d)}(x)$.

If $y \in R'$ with $y_- \in N^c_{{\rm (f)}}(x)$, then $c(xy_-) = c(xz)$ for $z \in V(C_{y_-})$ by the remark after the definition of $N^c(x)$.
Hence, $V(C_y) \cap N^c(x) = \{y_-\}$.
This implies that $V(C_y) \cap V(F) = \{y\}$ and so $ y_+ \notin V(F)$.
Set $u(y) = y_+$ for $y \in R$ with $y_{-} \in N^c_{(f)}(x)$.

Suppose that $y \in R'$ with $y_-\notin N^c_{{\rm (d)}}(x) \cup N^c_{{\rm (f)}}(x)$ (i.e. $y_-
 \in N^c_{{\rm (b)}}(x) \cup N^c_{{\rm (e)}}(x)$).
We may assume that $xy \in E(G)$ or else we set $u(y) = x$.
If $y_{-} \in N^c_{{\rm (e)}}(x)$, then $c(x y ) \ne c(x y_{-})$ by definition.
If $y_- \in N^c_{{\rm (b)}}(x)$, then $c(xy_{-}) = c(yy_{-})$.
Recall that every monochromatic subgraph of $G$ is a disjoint union of stars. 
Hence, $c(x y) \ne c(x y_{-})$.
In summary, we have $c(x y)  \ne c(x y_{-})$  for $y \in R'$ with $y_- \notin N^c_{{\rm (d)}}(x) \cup N^c_{{\rm (f)}}(x)$.
If $c(x y) \ne c(y y_+)$, then (i) implies that $x y C_{y}^+ y_{-} x$ is a properly coloured cycle and so we can enlarge $H$, a contradiction.
Hence, 
\begin{align}
c(x y) = c(y y_+) \text{ for all $y \in R'\cap N_G(x)$ with $y_- \notin N^c_{{\rm (d)}}(x) \cup N^c_{{\rm (f)}}(x)$.} \label{eqn:y'1}
\end{align}
Set $u(y) = x$ for all $y \in R \cap N_G(x)$ with $y_- \notin N^c_{{\rm (d)}}(x) \cup N^c_{{\rm (f)}}(x)$.

Suppose that $y \in S'$, so (ii) implies that $c(y_+y_{++}) \ne c(xy_+)$.
We may assume that $xy \in E(G)$ or else we set $u(y) = x$.
Recall that every monochromatic subgraph of $G$ is a disjoint union of stars. 
Hence, $c(x y) \ne c(x y_{+})$.
If $c(x y) \ne c(y y_-)$, then $x y_+ C_{y}^+ y_{-} x$ is a properly coloured cycle and so we can enlarge $H$, a contradiction.
Therefore
\begin{align}
c(xy) & = c(yy_-) \textrm{ for all $y \in S'\cap N_G(x)$.} \label{eqn:y'2}
\end{align}
Hence, we set $u(y) = x$ for all $y \in S\cap N_G(x)$.

Finally, suppose that $y \in T = R' \cap S'$.
We may further assume that $y_- \notin N^c_{{\rm (d)}}(x) \cup N^c_{{\rm (f)}}(x)$ and $xy \in E(G)$.
Since $y \in R'\cap N_G(x)$ and $y_- \notin N^c_{{\rm (d)}}(x) \cup N^c_{{\rm (f)}}(x)$, we have $c(xy) = c(y y_+)$ by~\eqref{eqn:y'1}.
On the other hand, since $y \in S'\cap N_G(x)$, we have $c(x y) = c(y y_-)$ by~\eqref{eqn:y'2}. 
Hence, $c(y y_+) = c(y y_-)$ contradicting the fact that the cycle $C_y$ is properly coloured.
Therefore, $xy \notin E(G)$ for all $y \in T$, so we can set $u(y) = x$ for $y \in T$.
This completes the proof of Claim~\ref{clm:y'}.
$\hfill{\blacksquare}$

\medskip
If $V(F) = T$, then $|T| \ge \delta/2$ by \eqref{eqn:2-factor:r+s+2t+w}.
By Claim~\ref{clm:E(F)}, $T$ is an independent set in $G$.
Claim~\ref{clm:y'} implies that for each $y \in T$ there exists a vertex $u(y) \notin V(F) \cup N(y)$ as the colour $c(y) = c_0$ does not appear in~$G$.
Therefore
\begin{align*}
	\delta \le d^c(y) \le |N_G(y)| \le n - |T| - |\{ u(y) \}| \le n-\delta/2-1,
\end{align*}
which gives $\delta <2n/3$ as required.
Thus, we may assume $V(F) \ne T$.
We are going to show that there exists a vertex $y_0 \in R \cup S \cup W$ such that 
\begin{align}
 |\{z \in V(F) \setminus\{y_0\}: y_0z \notin E(G) \textrm{ or } c(y_0 z) =c(y_0) \}| \ge (\delta -1)/2. \label{eqn:keyineq}
\end{align}
Define $F'$ to be the directed graph on $V(F)$ such that there is a directed edge from $y$ to $z$ if $yz \notin E(F)$ or $c(yz) = c(z)$.
Thus, in proving \eqref{eqn:keyineq}, it suffices to show that there exists a vertex $y_0 \in R \cup S \cup W$ with indegree at least $(\delta-1)/2$ in $F'$.
By Claim~\ref{clm:E(F)}, the base graph of $F'$ is complete.
Moreover, $\overrightarrow{yz}$ exists for all $y \in T$ and $z \in V(F)$ as the colour $c(y)=c_0$ does not appear in~$G$.
Hence, the number of directed edges into $R \cup S \cup W$ is at least $\binom{ |R \cup S \cup W| }{2} + |T| |R \cup S \cup W|$.
By an averaging argument, there exists $y_0 \in R \cup S \cup W$ with indegree
\begin{align*}
	d^-(y_0) & \ge (|R|+|S|+|W|-1)/2 + |T| 
	\overset{\eqref{eqn:2-factor:r+s+2t+w}}{\ge}  (\delta - 1)/2, \nonumber
\end{align*}
so \eqref{eqn:keyineq} holds.
Recall that the colour degree $d^c(y_0) \ge \delta^c(G) = \delta $, so $y_0$ meets at least $\delta$ edges of distinct colours. 
In particular, there are at least $\delta-1$ neighbours $z'$ of $y_0$ with $c(y_0 z') \ne c(y_0)$.
Hence, there are at most $n-\delta$ vertices $z$ in $V(G) \setminus\{y_0\}$ such that either $y_0z \notin E(G)$ or $c(y_0 z)= c(y_0)$.
By Claim~\ref{clm:y'}, there exists $u = u(y_0) \in V(G) \setminus V(F)$ such that $y_0 u \notin E(G)$ or $c(y_0 u) = c(y_0)$.
Together with~\eqref{eqn:keyineq}, we have
\begin{align*}
	n-\delta & \ge  |\{z \in V(F)\setminus\{y_0\} : y_0 z \notin E \textrm{ or } c(y_0 z) =c(y_0) \}| +| \{ u(y_0)\}| \\
& \ge  (\delta -1)/2 + 1.
\end{align*}
Thus, $\delta < 2n/3$ as required.
This completes the proof of Theorem~\ref{thm:2factor}.
\end{proof}

The properly coloured $2$-factor obtained by Theorem~\ref{thm:2factor} may contain $|G|/3$ cycles.
We would like to minimise the number of cycles in a properly coloured $2$-factor.
In the next lemma, we show that this can be achieved by assuming a slightly larger $\delta^c_1(G)$.
Recall that $\delta_1^c(G)$ is the minimum $\delta(G - H)$ over all monochromatic subgraphs~$H$ in~$G$.
So $\delta^c_1(G) \ge \delta^c(G) - 1$.

\begin{lma} \label{lma:2factor1}
For every integer $k \ge 1$, every edge-coloured graph $G$ with $|G| = n $ and $ \delta^c_1(G) \ge 2n/3 + k$ contains a properly coloured $2$-factor in which every cycle has length at least $k/2$.
In particular, $G$ can be covered by at most $\lfloor 2 n /k \rfloor$ vertex-disjoint properly coloured paths.
\end{lma}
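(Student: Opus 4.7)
The plan is to adapt the maximality argument from the proof of Theorem~\ref{thm:2factor} to the restricted family of configurations that respect the length lower bound. Let $\mathcal{F}$ be the family of properly coloured subgraphs of $G$ that are vertex-disjoint unions of cycles, each of length at least $k/2$; this family is nonempty since it contains the empty graph. Pick $H\in\mathcal{F}$ with $|V(H)|$ maximum. The goal is to show $V(H)=V(G)$. Once this is established, the ``in particular'' statement follows at once, since such an $H$ has at most $\lfloor 2n/k\rfloor$ cycles and deleting a single edge from each cycle yields the required covering of $V(G)$ by vertex-disjoint properly coloured paths.

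Assume for contradiction that some $x\in V(G)\setminus V(H)$ exists. I follow the scheme of the proof of Theorem~\ref{thm:2factor}: define $N^c(x)$ together with the preference classes (a)--(f), and the sets $W,R,S,T,F$ and vertex colours $c(y)$ exactly as there. The sole new ingredient is to replace each appeal to $\delta^c(G)$ by the stronger hypothesis $\delta_1^c(G)\ge 2n/3+k$, which says that for every vertex $v$ and every colour $c_0$ there are at least $2n/3+k$ neighbours $u$ of $v$ with $c(vu)\ne c_0$ (apply $\delta_1^c$ to the monochromatic subgraph of colour $c_0$). In every subcase of the modified proof where a new cycle is produced by absorbing one (or two) existing cycles $C_y$ (resp.\ $C_y,C_z$) of $H$ together with $x$, the new cycle inherits length at least $|C_y|\ge k/2$, so the modified subgraph lies in $\mathcal{F}$ and contradicts the maximality of $H$.

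The delicate case is the triangle $xyzx$ with $y,z\in W$ inside the proof of Claim~\ref{clm:E(F)}, which produces a new cycle of length $3$ and would violate the length lower bound as soon as $k>6$. I propose to handle it by splitting on $|V(G)\setminus V(H)|$. If $|V(G)\setminus V(H)|<k/2$ then also $|W|\le|V(G)\setminus V(H)|-1<k/2$, and the averaging step at the end of the proof of Theorem~\ref{thm:2factor}, with $\delta^c$ replaced by $\delta_1^c$, outputs the inequality $\delta_1^c(G)<2n/3+k$, contradicting the hypothesis. If instead $|V(G)\setminus V(H)|\ge k/2$, the induced subgraph $G[V(G)\setminus V(H)]$ retains enough of the $\delta_1^c$-slack to contain a properly coloured cycle of length at least $k/2$, found by iterating the same preference-based construction inside that subgraph and using the slack $k$ in the hypothesis to force the resulting cycle to be long. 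Adding this cycle to $H$ yields the required contradiction.

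The main obstacle is tracking the length constraint through the six-case analysis, and in particular handling the $y,z\in W$ subcase as above. The slack term $k$ in the degree hypothesis should serve a double role: it gives enough ``forbidden colours'' to be avoided at each individual exchange step, while simultaneously forcing any auxiliary cycle constructed inside $V(G)\setminus V(H)$ to have length at least $k/2$.
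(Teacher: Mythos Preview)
Your proposal has a genuine gap, and the paper takes a quite different route that avoids it.

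First, your claim that ``in every subcase of Claim~\ref{clm:E(F)} the new configuration lies in $\mathcal{F}$'' is false. In the subcase $y\in R$, $z\in S$ with $C_y=C_z=C$, the proof of Claim~\ref{clm:E(F)} replaces $C$ by the \emph{two} cycles $xy_-C^-z_+x$ and $yC^+zy$. The second of these is a proper sub-cycle of $C$ and can have length as small as~$3$, regardless of how long $C$ was. So even outside the $y,z\in W$ triangle case, your modified subgraph need not lie in $\mathcal{F}$, and the maximality argument breaks down.

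Second, your proposed repair of the $y,z\in W$ case does not work as stated. In the branch $|V(G)\setminus V(H)|<k/2$ you assert that the final averaging step yields $\delta_1^c(G)<2n/3+k$, but the averaging in Theorem~\ref{thm:2factor} produces only $n-\delta\ge(\delta-1)/2+1$; having $|W|$ small does not by itself improve the right-hand side by an additive $k$, and you have not indicated how the bound $|W|<k/2$ enters the inequality. In the other branch, saying that $G[V(G)\setminus V(H)]$ ``retains enough of the $\delta_1^c$-slack to contain a properly coloured cycle of length at least $k/2$'' is not justified: removing $V(H)$ from $G$ can destroy almost all of the $\delta_1^c$-degree of a vertex in the complement, and nothing in the preference construction forces the resulting cycle to be long.

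The paper avoids all of this by changing the object being maximised. It takes $H$ to be a maximal properly coloured \emph{$1$-path-cycle} (a path together with cycles, each cycle of length at least $k/2$), maximal first in $|H|$ and then in the length $\ell$ of the path $P$. One then shows $V(H)=V(G)$ and that the colour-avoiding neighbourhoods $N_1,N_\ell$ of the endpoints of $P$ lie entirely in $V(P)$, so the problem reduces to building a $2$-factor of $G[V(P)]$ with long cycles. This is done via an auxiliary bipartite directed graph on carefully defined sets $W_1,W_\ell\subseteq V(P)$: the extra $+k$ in the hypothesis guarantees a vertex lying in at least $k$ directed $2$-cycles, which in turn lets one choose a partner vertex along $P$ at distance at least $k/2$. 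A case analysis then closes $P$ into one, two, or three cycles each of length at least $k/2$. The role of the additive $k$ is thus completely different from what you propose: it buys distance along the path, not slack in a counting inequality.
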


The idea of the proof is rather simple.
Recall that a $1$-path-cycle is a vertex-disjoint union of at most one path $P$ and a number of cycles.
We consider a properly coloured $1$-path-cycle $H$ in $G$ such that every cycle has length at least $k/2$ and $|H|$ is maximal.
If we suppose that $V(H) = V(G)$, then we may assume that $H$ contains a path~$P$ as a component or else we are done.
Our aim is to show that there exists a properly coloured $2$-factor $H'$ in $G[V(P)]$ such that each cycle has length at least $k/2$.
Therefore $H - P + H'$ is the desired properly coloured $2$-factor.

\begin{proof}[Proof of Lemma~$\ref{lma:2factor1}$]
Suppose the contrary, and let $G$ be a counterexample with $|G| = n $ and $ \delta^c_1(G) \ge 2n/3 + k$. 
Let $H$ be a properly coloured $1$-path-cycle in $G$ such that every cycle has length at least $k/2$ and $|H|$ is maximal.
If $V(H) = V(G)$, then $H$ contains a path $P$ as a component or else we are done.
If $V(H) \ne V(G)$, then we may assume $H$ contains a path (maybe consisting of a single vertex).
Let $P = z_1 z_2 \dots z_{\ell}$ be the path in $H$.
We further assume that $H$ is chosen such that $\ell$ is maximal (subject to $|H|$ maximal).
Suppose that $\ell =1$ and so $P = z_1$.
Then $N(z_1) \subseteq V(H)$ follows by the maximality of $|H|$.
Let $y \in N(z_1)$, and orient $C_y$ into a directed cycle such that $c(y y_+) \ne c(z_1 y)$.
Then $(H - C_y - z_1 )+ z_1 y C_y^{+} y_-$ is a properly coloured $1$-path-cycle on vertex set $V(H)$ with path $z_1 y C_y^{+} y_-$, contradicting the maximality of~$\ell$.
Therefore, $\ell \ge 2$.

Let $N_1 = \{ x \in N_G(z_1): c(z_1 x) \ne c(z_1z_2) \}$ and $N_{\ell} = \{ x \in N_G(z_{\ell}): c(z_{\ell} x) \ne c(z_{\ell} z_{\ell-1 }) \}$.
So 
\begin{align}
|N_1| , |N_{\ell}| \ge \delta^c_1(G) \ge 2n/3 +k. \label{eqn:n1}
\end{align}
By the maximality of $|H|$, we have $N_1, N_{\ell} \subseteq V(H)$.
If $y \in N_1 \setminus V(P)$, then orient $C_y$ into a directed cycle such that $c(y y_+) \ne c(z_1 y)$.
Then $H - C_y - P + y_- C_y^{-} y P $ contradicts the maximality of~$\ell$.
Therefore, $N_1 \subseteq V(P)$ and similarly $N_{\ell} \subseteq V(P)$.
Moreover, 
\begin{align}
 \nonumber \ell = |V(P)| \ge |N_1| \overset{\eqref{eqn:n1}}{\ge}  2n/3+k.
\end{align}
If $z_{\ell} \in N_1 $ and $z_1 \in N_{\ell}$, then $C = z_1 \dots z_{\ell} z_1$ is a properly coloured cycle of length at least~$\ell \ge k$.
Hence, $H - P + C$ consists of vertex-disjoint properly coloured cycles each of length at least $k/2$.
Note that $H-P+C$ spans $V(H)$.
If $V(H) = V(G)$, then we are done.
If $V(H) \ne V(G)$, then together with a vertex in $V(G) \setminus V(H)$ we obtain a properly coloured $1$-path-cycle contradicting the maximality of $|H|$.
Therefore, we have $z_1 \notin N_{\ell}$ or $z_{\ell} \notin N_1$.
Let 
\begin{align}
N'_q = N_q \setminus ( \{z_1, z_2, \dots, z_{ \lceil k/2 \rceil} \} \cup \{z_{\ell}, z_{\ell-1}, \dots, z_{ \ell - \lceil k/2 \rceil+1} \}) \label{eqn:N'q}
\end{align}
for $q \in \{1, \ell \}$.
Since $z_1 \notin N_1$ and $z_\ell \notin N_{\ell}$, by~\eqref{eqn:n1}, we have 
\begin{align}
	|N'_1|, |N'_{\ell}| \ge 2n/3 +k - (2 \lceil k/2 \rceil -1)\ge 2n/3. \label{eqn:N'1}
\end{align}
For a given $x = z_i \in V(P)$, we write $x_+$ for $z_{i+1}$ if $i < \ell$, and $x_-$ for $z_{i-1}$ if $i>1$.
Set $c_-(x) = c(xx_{-})$ for $x \ne z_1$ and $c_+(x) = c(x x_{+})$ for $x \ne z_{\ell}$.
For distinct $x,y \in V(P)$, $x P y$ denotes the subpath of $P$ from $x$ to $y$.
If $x=y$, set $xPx = x$.
For $q \in \{1, \ell \}$, define
\begin{align*}
 	R_q  & = \{  x \in V(P) \setminus \{z_1 \}: x_{-} \in N'_q \textrm{ and } c(x_- z_q) = c_+(x_-)\}, \\
 	S_q' & = \{  x \in V(P) \setminus \{z_{\ell} \}: x_{+} \in N'_q \textrm{ and } c(x_{+} z_q) =  c_-(x_{+})\}, \\
 	S_q'' & = \{  x \in V(P) \setminus \{z_{\ell} \} : x_{+} \in N'_q \textrm{ and }  c_+(x_+) \ne c(x_{+} z_q) \ne  c_-(x_{+})\},\\
 	S_q & = S_q' \cup S_q''.
\end{align*}
Note that $S_q' \cap S_q'' = \emptyset$.
Since $P$ is properly coloured, for each $x \in N_1$, we have $c(x z_1) \ne c_+(x)$ or $c(x z_1) \ne  c_-(x)$.
Therefore, the three sets $\{x_- : x \in R_1\}$, $\{x_+ : x \in S'_1\}$, $\{x_+ : x \in S_1''\}$ are pairwise disjoint.
Moreover, the union of these three sets is precisely~$N_1'$.
A similar statement also holds for $N'_{\ell}$.
Hence, $|R_q| + |S_q| = |N_q'|$ for $q \in \{1, \ell\}$.
Note that if $z_{\ell} \in R_1 \cup S_1$, then $z_{\ell} \in R_1 \setminus S_1$.
Similarly if $z_1 \in R_{\ell} \cup S_{\ell}$, then $z_1 \in S_{\ell} \setminus R_{\ell}$.
For $q \in \{1, \ell \}$, we set
\begin{align*}
	W_q & = ( (R_q \cup S_q) \setminus \{z_1, z_{\ell} \} ) \cup \{ z_q\} , &
	T_q & = R_q \cap S_q, &
	U_q & = W_q \setminus T_q.
\end{align*}
Note that $z_q \notin R_q \cup S_q$ and $z_1, z_{\ell} \notin R_q \cap S_q$.
Hence, $z_q \in U_q$ and $|W_q| + |T_q| \ge |N_q'|$.
Moreover, for $q \in \{1, \ell\}$,
\begin{align}
	2|W_q|  & =	|W_q| + |T_q| + |U_q|  \ge  |N'_q| + |U_q| 
	\overset{\eqref{eqn:N'1}}{\ge}  |U_q| + 2n/3 \label{eqn:W_q}.
\end{align}

Now define an auxiliary directed bipartite graph $F$ as follows.
The vertex classes of $F$ are $W_1$ and $W_{\ell}$.
(Recall that even though $W_1$ and $W_{\ell}$ might not be disjoint, we treat $W_1$ and $W_{\ell}$ to be so in $F$.)
For $\{q,q'\} = \{1, \ell\}$ there is a directed edge in $F$ from $x \in W_q$ to $y \in W_{q'}$ if and only if $xy \in E(G)$ and one of the following statements holds:
\begin{itemize}
	\item[\rm (i)] $x \in R_q \setminus ( T_q  \cup \{ z_1, z_{\ell} \} ) $ and $c(xy) \ne c_+(x)$;
	\item[\rm (ii)] $x \in S_q \setminus ( T_q  \cup \{ z_1, z_{\ell} \} ) $ and $c(xy) \ne c_-(x)$;
	\item[\rm (iii)] $x \in T_q$;
	\item[\rm (iv)] $q = 1$, $x= z_{1}$ and $c(xy) \ne c(z_1z_2)$;
	\item[\rm (v)] $q = \ell$, $x= z_{\ell}$ and $c(xy) \ne c(z_{\ell} z_{\ell -1} )$.
\end{itemize}

We are going to show that there exists a vertex $u \in U_1 \cup U_{\ell}$ such that $u$ is in at least $k$ directed 2-cycles in $F$.
If $x \in R_1 \setminus T_1$, then $\overrightarrow{xy}$ exists for $y \in W_{\ell}$ if and only if $c(xy) \ne c_+(x)$.
Hence, the outdegree of $x$ in $F$ is 
\begin{align*}
	d^{+}_{F} (x) & \ge | \{ z \in N_G(x) : c(xz) \ne c_+(x) \} | + | W_{\ell}| - n \\
& \ge   \delta_1^c(G) + |W_{\ell}| - n \\
& \overset{\eqref{eqn:W_q}}{\ge} (2n/3 + k ) + (|U_\ell| + 2n/3)/2 - n 
= (|U_{\ell}| +2k) /2.
\end{align*}
A similar statement holds for $x \in (S_1 \setminus T_1)\cup \{z_1\} = U_1 \setminus R_1$.
In summary, for all $x \in U_1$,
\begin{align*}	
		d^{+}_{F} (x) & \ge (|U_{\ell}| +2k) /2 .
\end{align*}
Hence, there are at least $|U_1| (|U_{\ell}| +2k) /2$ directed edges from $U_1$ to~$W_{\ell}$, and similarly there are at least $|U_{\ell} | (|U_{1}| +2k) /2$ directed edges from $U_{\ell}$ to~$W_{1}$.
Note that if $\overrightarrow{xy}$ is a directed edge in $F$ with $x \in U_1$ and $y \in T_{\ell}$, then $xy \in E(G)$ and so $\{x,y\}$ forms a directed $2$-cycle in $F$ by~(iii).
Hence, if $\overrightarrow{xy}$ is a directed edge in $F$ not contained in a directed $2$-cycle with $x \in U_1$ and $y \in W_{\ell}$, then $y \in U_{\ell}$.
A similar statement holds for $\overrightarrow{xy}$ with $x \in U_{\ell}$ and $y \in W_{1}$.
Therefore, at most $|U_1||U_{\ell}|$ directed edges from $U_1 \cup U_{\ell}$ are not contained in a directed 2-cycle in~$F$.
The number of directed edges $\overrightarrow{xy}$ in $F$ such that $x \in U_1 \cup U_{\ell}$ and $\overrightarrow{xy}$ is contained in a directed $2$-cycle is at least
\begin{align}
	 & \frac{|U_1|( |U_{\ell}| +2k)}{2} + \frac{|U_{\ell} | (|U_{1}| +2k )}2 - |U_1| |U_{\ell}|   = k( |U_1| + |U_{\ell}| ). \nonumber
\end{align}
Hence, by an averaging argument, there exists a vertex $u \in U_1 \cup U_{\ell}$ that is in at least $k$ directed $2$-cycles in $F$.
Assume that $u \in U_1$.
There are at least $k$ vertices $w \in W_2 \setminus u $ such that $uw$ is a directed 2-cycle in~$F$.
Pick one such $w \in W_2$ such that the subpath $uPw$ in $G$ has length at least~$k/2$.
Similarly, if $u \in U_{\ell}$, then there is a vertex $w \in W_1$ such that $uw$ is a directed 2-cycle in~$F$
and the subpath $uPw$ in $G$ has length at least~$k/2$.
Without loss of generality, we may assume that $u \in W_1$ and $w \in W_{\ell}$.

If $u \in T_1 = R_1 \cap S_1$, then $u \in R_1 \setminus \{z_1, z_{\ell}\}$ with $c(uw) \ne c_+(u)$, or $u \in S_1 \setminus \{z_1, z_{\ell}\}$ with $c(uw) \ne c_-(u)$, as $P$ is a properly coloured path and $u \notin\{z_1, z_{\ell}\}$.
Together with (i), (ii) and (iv), we may assume that at least one of the following statements holds:
\begin{itemize}
	\item[(a$_1'$)] $u \in R_1 $ and $c(uw) \ne c_+(u)$;
	\item[(a$_2'$)] $u \in S_1 $ and $c(uw) \ne c_-(u)$; 
	\item[(a$_3'$)] $u = z_1$ and $c(z_1w) = c(uw) \ne c(z_1z_2)$.
\end{itemize}
If $u \in R_1$, then the definition of $R_1$ implies that $u_- \in N'_1$ and $c(z_1u_-) = c_+(u_-) \ne c_-(u_-)$.
Since $u_- \in N'_1$, $c(z_1z_2) \ne c(z_1u_-)$.
A similar statement also holds for $u \in S_1$, so at least one of the following statements holds:
\begin{itemize}
	\item[(a$_1$)] $u \in R_1 $, $c(uw) \ne c_+(u)$ and $c(z_1z_2) \ne c(z_1u_-) \ne c_-(u_-)$;
	\item[(a$_2$)] $u \in S_1 $, $c(uw) \ne c_-(u)$ and $c(z_1z_2) \ne c(z_1u_+) \ne c_+(u_+)$;
	\item[(a$_3$)] $u = z_1$ and $c(z_1w) \ne c(z_1z_2)$.
\end{itemize}
By symmetry, we can deduce that at least one of the following statements holds for~$w$:
\begin{itemize}
	\item[(b$_1$)] $w \in R_{\ell} $, $c(uw) \ne c_+(w)$ and $c(z_{\ell}z_{\ell-1}) \ne c(z_{\ell}w_-) \ne c_-(w_-)$;
	\item[(b$_2$)] $w \in S_{\ell} $, $c(uw) \ne c_-(w)$ and $c(z_{\ell}z_{\ell-1}) \ne c(z_{\ell}w_+) \ne c_+(w_+)$;
	\item[(b$_3$)] $w = z_{\ell}$ and $c(z_{\ell}u) \ne c(z_{\ell-1} z_{\ell})$.
\end{itemize}
We now claim that 
\begin{itemize}
\item[($\dagger$)] there exists a properly coloured $2$-factor $H'$ in $G[V(P)]$ such that each cycle has length at least $k/2$.
 \end{itemize}
If ($\dagger$) holds, then $H'' = H - P + H'$ is a disjoint union of properly coloured cycles each of length at least $k/2$ on vertex set $V(H)$.
If $V(H'') = V(G)$, then $H''$ is the desired properly coloured $2$-factor.
If $z \in V(G) \setminus V(H'')$, then $H'' \cup \{ z \}$ is a properly coloured 1-path-cycle contradicting the maximality of $|H| = |H''|$.
Therefore, in order to complete the proof, it suffices to prove~($\dagger$) for each combination of (a$_i$) and (b$_j$) for $1 \le i,j \le 3$.
We say that $u < w$ if $u = z_i$ and $w = z_j$ with $i < j$.
We would like to point out that Cases~2--6 are proved by similar arguments used in Case~1.
Since the explicit structures of $H'$ (see Figure~\ref{fig:r1sl}) are different, we include their proofs for completeness.
\begin{figure}[tbp]
\centering

\subfloat[$u \in R_1$ and $w = S_{\ell}$ with $u_- = w$ ]{
\includegraphics[scale=0.6]{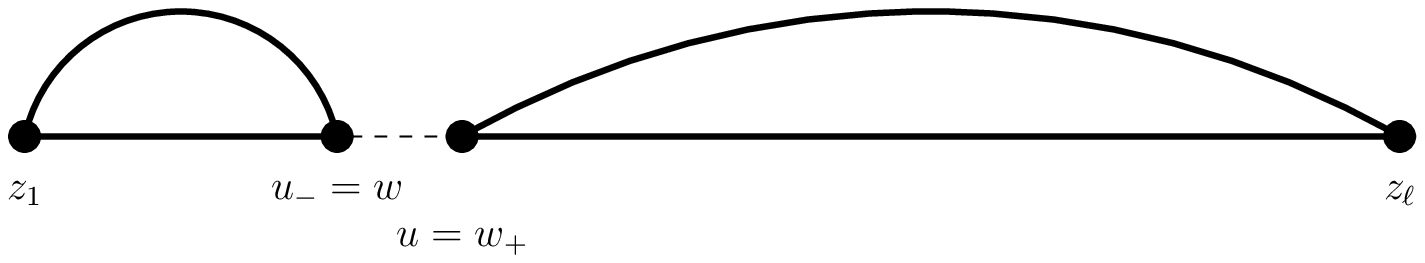}}

\subfloat[$u \in R_1$ and $w \in S_{\ell}$ with $u_- > w$]{
\includegraphics[scale=0.6]{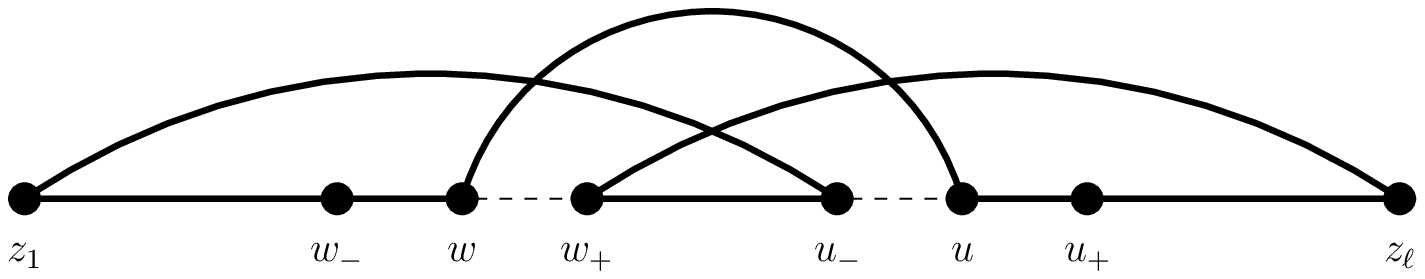}}

\subfloat[$u \in R_1$ and $w \in S_{\ell}$ with $u < w$]{
\includegraphics[scale=0.6]{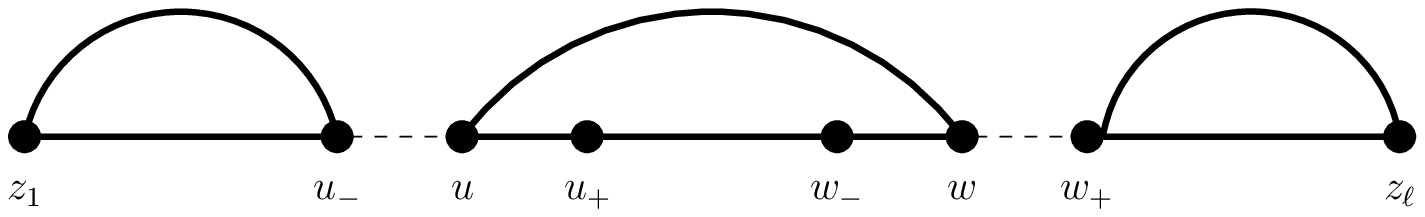}}

\subfloat[$u \in R_1$ and $w \in R_{\ell}$ with $u < w$]{
\includegraphics[scale=0.6]{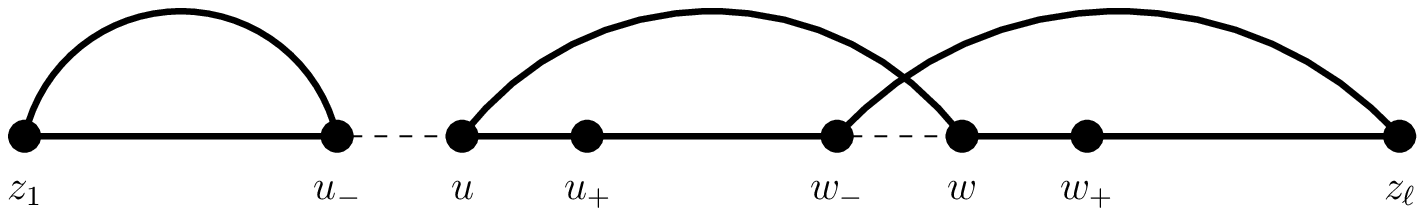}}

\subfloat[$u \in R_1$ and $w \in R_{\ell}$ with $u > w$]{
\includegraphics[scale=0.6]{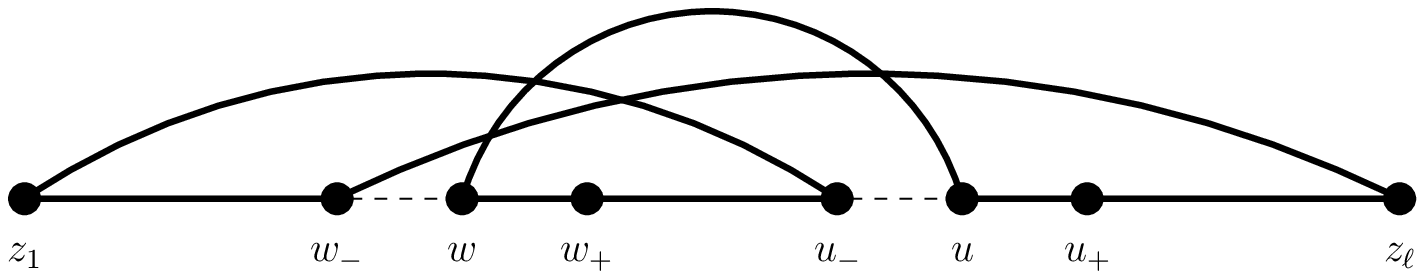}}

\subfloat[$u \in R_1$ and $w = z_{\ell}$]{
\includegraphics[scale=0.6]{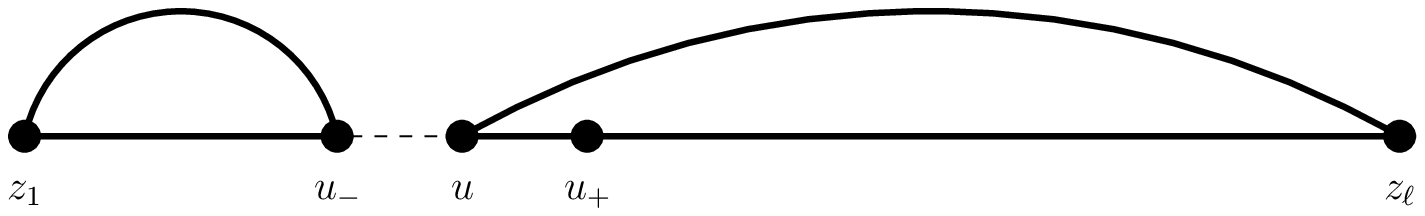}}

\subfloat[$u \in S_1$ and $w = R_{\ell}$ with $u < w_-$]{
\includegraphics[scale=0.6]{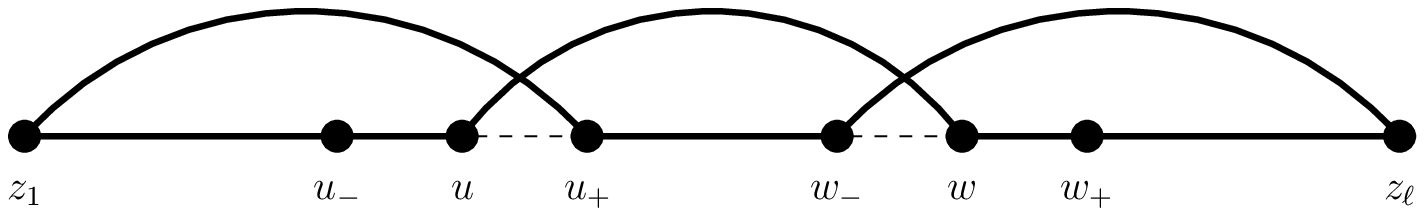}}

\subfloat[$u \in S_1$ and $w = R_{\ell}$ with $u =w_-$]{
\includegraphics[scale=0.6]{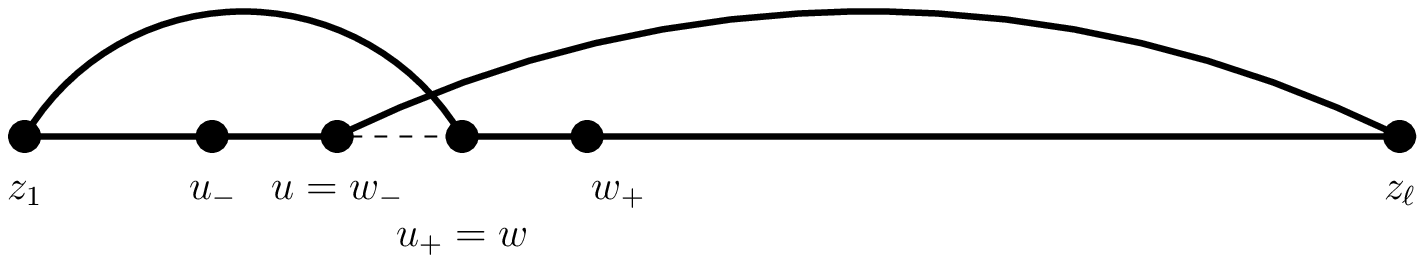}}

\subfloat[$u \in S_1$ and $w = R_{\ell}$ with $u > w$ ]{
\includegraphics[scale=0.6]{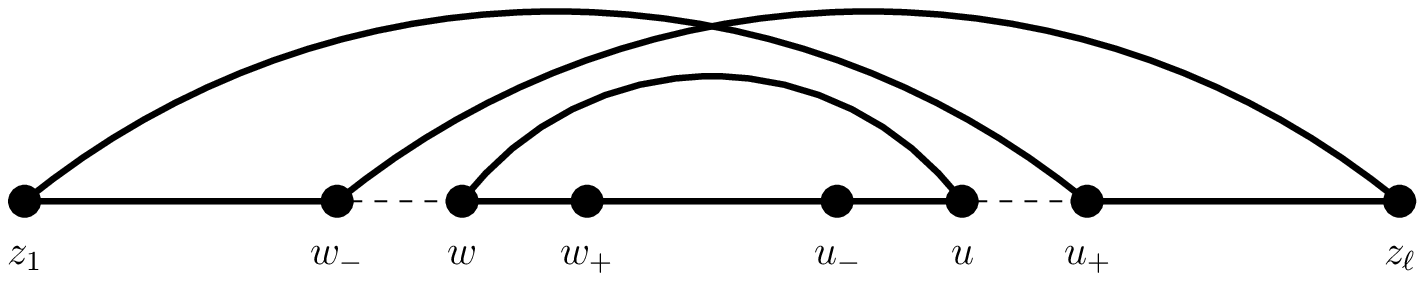}}

\subfloat[$u \in S_1$ and $w = z_{\ell}$]{
\includegraphics[scale=0.6]{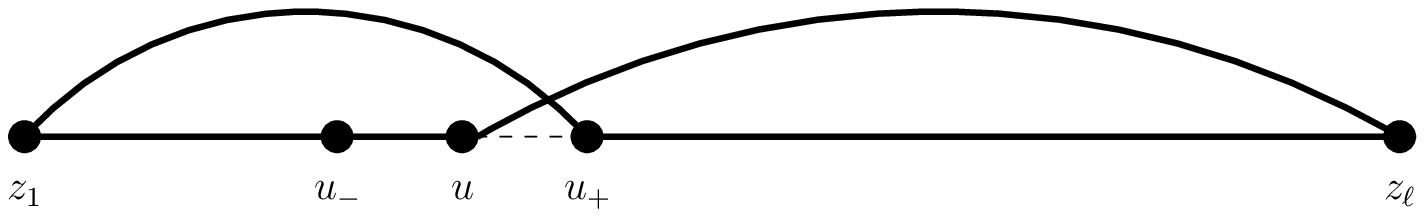}}

\caption{Structures of $H'$.}
\label{fig:r1sl}
\end{figure}
\noindent\textbf{Case 0: (a$_3$) and (b$_3$) hold.} 
Since $u = z_1$ and $w = z_{\ell}$, (a$_3$) implies that $z_{\ell} \in N_1$ and (b$_3$) implies that $z_1 \in N_{\ell}$.
Recall that $z_1 \notin N_{\ell}$ or $z_{\ell} \notin N_1$.
Hence, we have a contradiction.

\noindent\textbf{Case 1: (a$_1$) and (b$_2$) hold.}
We have the following three statements:
\begin{align}
c_+(u) & \ne c(uw) \ne  c_-(w), \label{eqn:cuw1}\\
c(z_1z_2) & \ne c( z_1u_-)  \ne  c_- (u_- ), \label{eqn:cuw2} \\
c(z_{\ell}z_{\ell-1}) & \ne c( z_{\ell} w_+) \ne  c_+ (w_+ ).  \label{eqn:cuw3}
\end{align}
If $u_- = w$, then we set $H' = z_1 P u_- z_1  + w_+ P z_{\ell} w_+$ (see Figure~\ref{fig:r1sl}(A)).
Note that $H'$ is properly coloured and a union of two cycles on vertex set $V(H)$.
By (a$_1$) and \eqref{eqn:N'q}, we have $u_- \in N'_1 \subseteq N_1 \setminus \{z_1, \dots, z_{\lceil k/2 \rceil} \}$.
Thus, $z_1 P u_- z_1$ is a cycle of length at least $k/2$.
By a similar argument, we deduce that $w_+Pz_{\ell} w_+$ is also a cycle of length at least $k/2$ as $w_+ \in N'_{\ell}$ by~(b$_2$).
Hence ($\dagger$) holds.

Next suppose that $u_- > w$.
If $u_- = w_+$ and $c(z_{\ell} w_+ ) = c(z_1 u_-)$, then
\begin{align}
c(z_{\ell} w_+ ) = c(z_1 u_-)  = c_+(u_-) = c_+(w_+)  \ne c_-(w_+), \nonumber
\end{align}
where the second equality is due to the fact that $u \in R_1$.
Recall~\eqref{eqn:cuw3} that $c( z_{\ell} w_+) \ne  c_+ (w_+ )$.
This contradicts the fact that $w \in S_{\ell}$.
Therefore, if $u_- = w_+$, then $c(z_1 u_-) \ne c(z_{\ell} w_+ )$.
Together with~\eqref{eqn:cuw1}--\eqref{eqn:cuw3}, we conclude that $H' = z_1 u_- P w_+ z_{\ell} P u w P z_1$ (see Figure~\ref{fig:r1sl}(B)) is a properly coloured cycle with vertex set $V(P)$, so ($\dagger$) holds.

If $u < w $, then $z_1 P u_- z_1$, $uPwu$ and $w_+Pz_{\ell} w_+$ (see Figure~\ref{fig:r1sl}(C)) are properly coloured cycles by~\eqref{eqn:cuw1}--\eqref{eqn:cuw3}.
We will show that each is a cycle of length at least $k/2$ (which then implies ($\dagger$)).
By (a$_1$) and \eqref{eqn:N'q}, we have $u_- \in N'_1 \subseteq N_1 \setminus \{z_1, \dots, z_{\lceil k/2 \rceil} \}$.
Thus, $z_1 P u_- z_1$ is a cycle of length at least $k/2$.  
Similarly, $w_+Pz_{\ell} w_+$ is also a cycle of length at least $k/2$ as $w_+ \in N'_{\ell}$ by~(b$_2$) and \eqref{eqn:N'q}.
Since $u$ and $w$ are chosen such that the subpath $uPw$ has length at least $k/2$, $uPwu$ has length at least $k/2$.
Therefore, ($\dagger$) holds for (a$_1$) and (b$_2$).

\noindent\textbf{Case 2: (a$_1$) and (b$_1$) hold.}
We have the following three statements:
\begin{align}
c_+(u) & \ne c(uw) \ne  c_+(w), \label{eqn:cuw111}\\
c(z_1z_2) & \ne c( z_1u_-)  \ne  c_- (u_- ), \label{eqn:cuw112} \\
c(z_{\ell}z_{\ell-1}) & \ne c( z_{\ell} w_-) \ne  c_- (w_- ).  \label{eqn:cuw113}
\end{align}
If $u < w$, then we set $H' = z_1 P u_- z_1  + u P w_- z_{\ell} P w u$ (see Figure~\ref{fig:r1sl}(D)).
Note that $H'$ is properly coloured.
By (a$_1$) and \eqref{eqn:N'q}, we deduce that $z_1 P u_- z_1$ is a cycle of length at least $k/2$.
Recall our choices of $u$ and $w$ that $uPw$ has length at least $k/2$.
This implies that $u P w_- z_{\ell} P w u$ is also a cycle of length at least $k/2$.
Hence ($\dagger$) holds.

Next suppose that $u > w$.
By~\eqref{eqn:cuw111}, we deduce that $w_+ \ne u$ and so $w < u_-$.
Hence, $w P u_-$ is a path with length at least one.
Together with~\eqref{eqn:cuw111}--\eqref{eqn:cuw113}, we conclude that $H' = z_1 P w_- z_{\ell} P u w P u_- z_1$ (see Figure~\ref{fig:r1sl}(E)) is a properly coloured cycle with vertex set $V(P)$, so ($\dagger$) holds. 
Therefore, ($\dagger$) holds for (a$_1$) and (b$_1$).

\noindent\textbf{Case 3: (a$_1$) and (b$_3$) hold.}
Since $w = z_{\ell}$, we have
\begin{align}
c_+(u) \ne c(u z_{\ell}) \ne  c_-(z_{\ell-1} z_{\ell})
\text{ and } c(z_1 z_2) \ne c(z_1 u_-) \ne c_-(u_-). \label{case2}
\end{align}
Note that both $z_1 P u_- z_1$ and $u P z_{\ell} u$ (see Figure~\ref{fig:r1sl}(F)) are properly coloured cycles.
By~\eqref{eqn:N'q}, both cycles have length at least $k/2$.
Therefore, ($\dagger$) holds for (a$_1$) and (b$_3$) by setting $H' = z_1 P u_{-} z_1 + u P z_{\ell} u$.

\noindent\textbf{Case 4: (a$_2$) and (b$_1$) hold.}
We have the following three statements:
\begin{align}
c_-(u) & \ne c(uw) \ne  c_+(w), \label{eqn:cuw211}\\
c(z_1z_2) & \ne c( z_1u_+)  \ne  c_+ (u_+), \label{eqn:cuw212} \\
c(z_{\ell}z_{\ell-1}) & \ne c( z_{\ell} w_-) \ne  c_- (w_- ).  \label{eqn:cuw213}
\end{align}
First suppose that $u < w_-$.
If $u_+ = w_-$ and $c(z_{\ell} w_- ) = c(z_1 u_+) $, then
\begin{align}
c(z_1 u_+) = c(z_{\ell} w_- )= c_+(w_-) = c_+(u_+) \ne c_-(u_+), \nonumber
\end{align}
where the second inequality is due to the fact that $w \in R_{\ell}$.
This contradicts the fact that $u \in S_{1}$ as $c( z_1u_+)  \ne  c_+ (u_+)$ by~\eqref{eqn:cuw212}.
Therefore, if $u_+ = w_-$, then $c(z_1 u_-) \ne c(z_{\ell} w_+ )$.
Together with~\eqref{eqn:cuw211}--\eqref{eqn:cuw213}, we conclude that $H' = z_1 P u w P z_{\ell} w_- P u_+ z_1$ (see Figure~\ref{fig:r1sl}(G)) is a properly coloured cycle with vertex set $V(P)$, so ($\dagger$) holds.

If $u = w_-$, then $H' = z_1 P w_- z_{\ell} P u_+ z_1$ (see Figure~\ref{fig:r1sl}(H)) is a properly coloured cycle with vertex set $V(P)$ by \eqref{eqn:cuw212} and~\eqref{eqn:cuw213}.
Thus ($\dagger$) holds.

Suppose that $u > w$.
By~\eqref{eqn:cuw211}, we deduce that $w_+ \ne u$ and so $w < u_-$.
Hence, $w P u w$ is a properly coloured cycle.
Moroever, it has length at least $k/2$ by our choices of $u$ and $w$.
By \eqref{eqn:cuw212} and \eqref{eqn:cuw213}, we conclude that $z_1 P w_- z_{\ell} P u_+ z_1$ is a properly coloured cycle.
It has length at least $k/2$ by~\eqref{eqn:N'q}.
Hence ($\dagger$) holds by setting $H' = w P u w+ z_1 P w_- z_{\ell} P u_+ z_1$ (see Figure~\ref{fig:r1sl}(I)).
Therefore, ($\dagger$) holds for (a$_2$) and (b$_1$).

\noindent\textbf{Case 5: (a$_2$) and (b$_3$) hold.}
Since $w = z_{\ell}$, we have
\begin{align}
c_-(u) \ne c(u z_{\ell}) \ne  c_-(z_{\ell-1} z_{\ell})
\text{ and } c(z_1 z_2) \ne c(z_1 u_+) \ne c_+(u_+). \nonumber
\end{align}
Note that $z_1 P u z_{\ell} P u_+ z_1$ (see Figure~\ref{fig:r1sl}(J)) is a properly coloured cycle with vertex set $V(P)$ and so ($\dagger$) holds.

\noindent\textbf{Case 6: (a$_i$) and (b$_j$) hold for $(i,j) \in \{(3,2), (2,2), (3,1)\}$.}
Let $P' = z'_1 z'_2 \dots z'_{\ell}$ be the properly coloured path obtained by reversing~$P$. 
Hence, $z'_i = z_{\ell - i + 1}$ for all $1 \le i \le \ell$.
Given $x' = z_i' \in V(P')$ with $i < \ell$, write $x_{+'}$ to mean $z'_{i+1}$ and  $c_{+'}(x) = c(x x_{+'})$.
Similarly, given $x' = z_i' \in V(P')$ with $i >1 $, we write $x_{-'}= z'_{i-1}$ and $c_{-'}(x) = c(x x_{-'})$.
(Hence, these primed notations are defined relative to~$P'$.)
Recall that $P'$ is the reversed $P$, so $c_{+'}(x) = c_-(x)$ and $c_{-'}(x) = c_+(x)$.
Further, set $u' = w$ and $w' = u$.

Now suppose that  (a$_3$) and (b$_2$) hold.
Under the primed notation, we have
\begin{align*}
c_{+'}(u') \ne c(u' z'_{\ell}) \ne  c_-(z'_{\ell-1} z'_{\ell})
\text{ and } c(z'_1 z'_2) \ne c(z'_1 u'_{-'}) \ne c_{-'}(u'_{-'}).
\end{align*}
More importantly, if we ignore the primes, then we obtain \eqref{case2}.
Therefore, in hindsight, we are in the case when (a$_1$) and (b$_3$) hold with respect to~$P'$, i.e. we are in Case~3.
Hence, ($\dagger$) holds by setting $H' = z'_1 P' u'_{-'} z'_1 + u' P' z'_{\ell} u'$.

Similarly, we deduce that the case when (a$_2$) and (b$_2$) hold with respect to $P$ corresponds to the case when (a$_1$) and (b$_1$) hold with respect to $P'$.
Also, (a$_3$) and (b$_1$) (with respect to $P$) corresponds to (a$_2$) and (b$_3$) (with respect to~$P'$).
Therefore, we are in Case~1 and Case~5, respectively.
The proof of the lemma is completed.
\end{proof}

\section{absorbing cycle} \label{sec:abscycle}

The aim of this section is to prove the following lemma, that is, to show that there exists a small absorbing cycle.

\begin{lma}[Absorbing cycle lemma] \label{lma:abscycle}
Let $0 < \eps < 2^{-9} 3^{-1}$.
Then there exists an integer $n_0$ such that whenever $n \ge n_0$ the following holds.
Suppose that $G$ is an edge-coloured graph of order $n$ with $\delta^c_1(G) \ge (2/3 + \eps) n$.
Then there exists a properly coloured cycle $C$ of length at most $2 \eps n /3$ such that for all $k \le   (8 \eps^2 n )/243$ and for any collections $P_1,\dots,P_k$ of vertex-disjoint properly coloured paths in $G \setminus V(C)$, there exists a properly coloured cycle with vertex set $V(C) \cup \bigcup_{1 \le i \le k} V(P_i)$.
\end{lma}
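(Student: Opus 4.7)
Plan. I would follow the absorbing method of R\"odl--Ruci\'nski--Szemer\'edi. For each \emph{boundary type} $\tau=(a,b,\gamma_1,\gamma_2)$, where $a,b\in V(G)$ (possibly equal) and $\gamma_1,\gamma_2$ are colours representing the first and last edge-colours of a potential path to be absorbed, I would call a quadruple $(x_1,x_2,x_3,x_4)$ of distinct vertices in $V(G)\setminus\{a,b\}$ a \emph{$\tau$-absorber} if $x_1x_2x_3x_4$ is a properly coloured path in $G$, $x_2a,bx_3\in E(G)$, $c(x_2a)\notin\{c(x_1x_2),\gamma_1\}$, and $c(bx_3)\notin\{c(x_3x_4),\gamma_2\}$. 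The key observation is that whenever such an absorber sits as four consecutive vertices in a properly coloured cycle $C$, and $P=p_1\dots p_s$ is a properly coloured path vertex-disjoint from $C$ with $p_1=a$, $p_s=b$, $c(p_1p_2)=\gamma_1$, $c(p_{s-1}p_s)=\gamma_2$, replacing the edge $x_2x_3$ of $C$ by the detour $x_2p_1\dots p_sx_3$ yields a properly coloured cycle on $V(C)\cup V(P)$.

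Next I would count, using $\delta^c_1(G)\ge(2/3+\eps)n$: each single constraint of the form ``$xv\in E(G)$ and $c(xv)$ avoids a prescribed colour'' discards at most $(1/3-\eps)n$ candidates for $x$, so an inclusion--exclusion argument yields at least $\beta n^4$ absorbers for every boundary type $\tau$, for some $\beta=\beta(\eps)>0$. I would then randomly select a set $\mathcal A$ of $m=\Theta(\eps^2 n)$ pairwise vertex-disjoint candidate absorbers (for instance via a greedy random process or by passing to a suitable random induced subgraph and extracting absorbers). A Chernoff bound combined with a union bound over the at most $n^4$ boundary types shows that with positive probability every $\tau$ is represented in $\mathcal A$ by at least $k_0:=8\eps^2 n/243$ absorbers, while the total number of vertices used is at most $2\eps n/3$.

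To assemble the selected absorber-quadruples into a single properly coloured cycle $C$, I would connect consecutive quadruples by short properly coloured linking paths (of length at most two) drawn from the vertices outside the absorbers. Such linkers exist because, by the colour-degree hypothesis, for any two vertices $u,v$ and any two forbidden colours the set of common intermediate vertices $w$ with $c(uw),c(wv)$ avoiding the forbidden colours and $c(uw)\ne c(wv)$ has size at least $(1/3+\Omega(\eps))n$. This keeps the length of $C$ within the allowed $2\eps n/3$. Given any family $P_1,\dots,P_k$ of vertex-disjoint properly coloured paths in $G\setminus V(C)$ with $k\le k_0$, each $P_i$ has some boundary type $\tau_i$, and $C$ contains at least $k_0\ge k$ absorbers compatible with $\tau_i$. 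Hall's condition holds trivially (for any $I\subseteq\{1,\dots,k\}$ the union of neighbourhoods has size at least $k_0\ge|I|$), so a system of distinct representatives exists; performing the $k$ local reroutings simultaneously at the chosen absorbers gives the required cycle on $V(C)\cup\bigcup_iV(P_i)$.

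The main obstacle is the assembly step: each absorber rigidly prescribes a 4-segment with specific endpoint colours, so the linking paths must be colour-compatible with both adjacent absorbers while preserving vertex-disjointness from all previously used vertices. The delicate point is to arrange the random selection so as to secure simultaneously (i) the per-type count of $\ge k_0$ absorbers, (ii) pairwise vertex-disjointness of all absorbers, and (iii) enough free vertices with the right colour profile for the linkers. It is in balancing these three requirements that the quantitative slack provided by the hypothesis $\eps<2^{-9}/3$ is consumed, allowing all three properties to hold simultaneously with positive probability.
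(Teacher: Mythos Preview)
Your overall strategy is the same as the paper's: count absorbers, select a random family with many absorbers of every type, then stitch them into a short cycle. Two concrete steps, however, do not go through as written.

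First, your linking claim is false. You assert that for any $u,v$ and forbidden colours there are at least $(1/3+\Omega(\eps))n$ vertices $w$ with $uw,wv\in E(G)$, $c(uw)\ne c(wv)$, and both colours avoiding the forbidden ones. But nothing in the hypothesis $\delta_1^c(G)\ge(2/3+\eps)n$ prevents $c(uw)=c(vw)$ for \emph{every} common neighbour $w$: take $V=\{u,v,w_1,\dots,w_{n-2}\}$, colour $uw_i$ and $vw_i$ both with colour $i$, and make $G[\{w_1,\dots,w_{n-2}\}]$ a rainbow clique. Then $\delta_1^c(G)=n-3$, yet no single intermediate vertex gives a proper $2$-path from $u$ to $v$. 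The paper resolves this by inserting an \emph{edge} $z_1z_2$ (two new vertices) rather than a single vertex; its Lemma~\ref{lma:ifar} shows that with $\delta_1^c(G)\ge 2n/3+1$ one can always find $z_1,z_2$ so that $x_1x_2z_1z_2y_1y_2$ is properly coloured. Using this two-vertex link still keeps the cycle length at $6|\mathcal F'|\le 2\eps n/3$.

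Second, your absorber is indexed by $(a,b,\gamma_1,\gamma_2)$ where $\gamma_1,\gamma_2$ are the end-edge colours of the path to be absorbed. A trivial path $P=\{a\}$ has no such colours, and even if you set $a=b$ your condition does not force $c(x_2a)\ne c(ax_3)$, which is needed for $x_1x_2ax_3x_4$ to be proper. The paper handles this by carrying a second absorber type $\mathcal L(x)$ for single vertices, and by breaking each path of at most three vertices into singletons (which also guarantees that $x_1,x_2,x_{\ell-1},x_\ell$ are distinct when the edge-type absorber is used). You need an analogous device. Apart from these two points your sketch matches the paper's argument; in particular your union bound over at most $n^4$ boundary types is legitimate, since each relevant pair $(a,\gamma_1)$ is determined by a directed edge of $G$.
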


Given a vertex $x$, we say that a path $P$ is an \emph{absorbing path for $x$} if the following conditions hold:
\begin{enumerate}
	\item[\rm (i)] $P = z_1z_2z_3z_4$ is a properly coloured path of length~$3$; 
	\item[\rm (ii)] $ x \notin V(P)$;
	\item[\rm (iii)] $z_1 z_2 x z_3 z_4$ is a properly coloured path.
\end{enumerate}
Next we define an absorbing path for two disjoint edges.
Given two vertex-disjoint edges $x_1x_2$, $y_1y_2$, we say that a path $P$ is an \emph{absorbing path for $(x_1, x_2; y_1, y_2)$} if the following conditions hold:
\begin{enumerate}
	\item[\rm (i)] $P = z_1z_2z_3z_4$ is a properly coloured path of length~$3$;
	\item[\rm (ii)] $V(P) \cap  \{ x_1, x_2, y_1, y_2\} = \emptyset$; 
	\item[\rm (iii)] both $z_1 z_2 x_1 x_2 $ and $y_1 y_2 z_3 z_4$ are properly coloured paths of length~3.
\end{enumerate}
Note that the ordering of $(x_1, x_2; y_1, y_2)$ is important.
Given a vertex $x$, let $\mathcal{L}(x)$ be the set of absorbing paths for $x$.
Similarly, given two vertex-disjoint edges $x_1x_2$, $y_1y_2$, let $\mathcal{L}(x_1, x_2; y_1,y_2)$ be the set of absorbing paths for $(x_1, x_2; y_1,y_2)$.
The following simple proposition follows immediately from the definition of an absorbing path for $(x_1, x_2; y_1,y_2)$.

\begin{prp} \label{prp:abspath}
Let $P' = x_1 x_2 \dots x_{\ell-1} x_{\ell}$ be a properly coloured path with $\ell \ge 4$. 
Let $P = z_1 z_2 z_3 z_4$ be an absorbing path for $(x_1, x_2; x_{\ell-1}, x_{\ell} )$ with $V(P) \cap V(P') = \emptyset$.
Then $z_1 z_2 x_1 x_2 \dots x_{\ell -1} x_{\ell} z_3z_4$ is a properly coloured path.
\end{prp}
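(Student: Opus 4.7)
The plan is a direct verification from the definitions, since the notion of an absorbing path for $(x_1, x_2; y_1, y_2)$ was set up precisely so that the conclusion follows almost tautologically. First I would check that the vertex sequence $z_1, z_2, x_1, x_2, \dots, x_{\ell-1}, x_\ell, z_3, z_4$ contains no repetitions: $P$ and $P'$ are individually paths and $V(P) \cap V(P') = \emptyset$ by hypothesis, so all $\ell+4$ listed vertices are distinct.

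Next I would verify that each consecutive pair in the sequence is actually an edge of $G$. The pairs $z_1 z_2$, $z_2 x_1$, $x_\ell z_3$ and $z_3 z_4$ are edges by conditions (i) and (iii) in the definition of an absorbing path for $(x_1, x_2; x_{\ell-1}, x_\ell)$, which require $z_1 z_2 x_1 x_2$ and $x_{\ell-1} x_\ell z_3 z_4$ to be (properly coloured) paths of length~3. The intermediate pairs $x_i x_{i+1}$ for $1 \le i \le \ell-1$ are edges because $P'$ itself is a path.

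For the proper-colouring condition I would partition the ``consecutive edges receive distinct colours'' checks into three overlapping windows: the triple $z_1 z_2, z_2 x_1, x_1 x_2$ lies inside the properly coloured path $z_1 z_2 x_1 x_2$; the segment $x_1 x_2, x_2 x_3, \dots, x_{\ell-1} x_\ell$ is the edge sequence of the properly coloured path $P'$; and the triple $x_{\ell-1} x_\ell, x_\ell z_3, z_3 z_4$ sits inside the properly coloured path $x_{\ell-1} x_\ell z_3 z_4$. Every consecutive pair of edges in the concatenation belongs to at least one of these three sub-paths and therefore carries two different colours.

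There is no substantive obstacle here: the proposition is essentially a bookkeeping statement confirming that the two ``absorption hooks'' at the ends of $P$ splice onto the two ends of $P'$ without creating a colour clash. All the real work has been absorbed into the definition of an absorbing path for $(x_1, x_2; y_1, y_2)$, so I would expect the write-up to be only a few lines.
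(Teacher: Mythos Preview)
Your proposal is correct and matches the paper's own treatment: the paper does not even write out a proof, stating only that the proposition ``follows immediately from the definition of an absorbing path for $(x_1, x_2; y_1, y_2)$.'' Your three-window verification is exactly the routine check the paper leaves to the reader.
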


Lemma~\ref{lma:abscycle} will be proved as follows.
Suppose that $\delta_1^c(G) \ge (2/3 + \eps)n $. 
In the next lemma, we show that every $\mathcal{L}(x)$ and every $\mathcal{L}(x_1, x_2; y_2, y_1)$ are large.
By a simple probabilistic argument, Lemma~\ref{lma:absorbing} shows that there exists a small family $\mathcal{F}'$ of vertex-disjoint properly coloured paths (each of length~$3$) such that $\mathcal{F}'$ contains a linear number of members of  $\mathcal{L}(x)$ for all $x \in V(G)$ and a similar statement holds for $\mathcal{L}(x_1, x_2; y_1, y_2)$.
Finally, we join all paths in $\mathcal{F}'$ into one short properly coloured cycle $C$ using Lemma~\ref{lma:ifar}.
Moreover, $C$ satisfies the desired property in Lemma~\ref{lma:abscycle}.

\begin{lma} \label{lma:abspath}
Let $0 < \eps < 1/2$ and let $n \ge  4/ \eps^{2}$ be an integer.
Suppose that $G$ is an edge-coloured graph on $n$ vertices with $\delta^c_1(G) \ge (1/2+\eps) n$.
Then $|\mathcal{L}(x)|\ge \eps^3 n^4 $ for every $x \in V(G)$ and $|\mathcal{L}(x_1, x_2; y_1, y_2)|\ge \eps^3 n^4 $ for any distinct vertices $x_1,x_2, y_1,y_2 \in V(G)$ with $x_1x_2$, $y_1y_2 \in E(G)$.
\end{lma}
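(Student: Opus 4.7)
The plan is to lower-bound each set of absorbing paths by a direct greedy count, building $P = z_1 z_2 z_3 z_4$ one vertex at a time in the order $z_2, z_3, z_1, z_4$. The only tool required is the observation that the hypothesis $\delta_1^c(G) \ge (1/2+\eps)n$ implies that for every vertex $v$ and every colour $c_0$, at least $(1/2+\eps)n$ neighbours $u$ of $v$ satisfy $c(uv) \ne c_0$: simply apply the definition of $\delta_1^c$ to the monochromatic subgraph consisting of all colour-$c_0$ edges. Combined with inclusion--exclusion ($|A \cap B| \ge |A| + |B| - n$), this yields $\ge 2\eps n - O(1)$ choices at each subsequent step, whether the constraints involved are ``one forbidden colour at each of two vertices'' or ``two forbidden colours at a single vertex.''

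For $|\mathcal{L}(x)|$, the plan is to first pick $z_2 \in N(x)$, giving $\ge (1/2+\eps)n$ options; then $z_3 \in N(x) \cap N(z_2)$ with $c(xz_3) \ne c(xz_2)$, giving $\ge 2\eps n - O(1)$ options by intersecting $N(z_2)$ with the non-$c(xz_2)$ neighbours of $x$; then $z_1 \in N(z_2) \setminus \{x, z_3\}$ with $c(z_1 z_2) \notin \{c(z_2 x), c(z_2 z_3)\}$; and finally $z_4 \in N(z_3) \setminus \{x, z_1, z_2\}$ with $c(z_3 z_4) \notin \{c(z_3 x), c(z_3 z_2)\}$. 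Each of the last two steps again offers $\ge 2\eps n - O(1)$ choices by a second inclusion--exclusion inside $N(z_2)$ (respectively $N(z_3)$). Together the constraints imposed are exactly those needed to make both $P = z_1 z_2 z_3 z_4$ and $z_1 z_2 x z_3 z_4$ properly coloured, with the remaining condition $c(xz_2) \ne c(xz_3)$ being built into the choice of $z_3$. Multiplying the four lower bounds yields roughly $(1/2+\eps)n \cdot (2\eps n)^3$, which exceeds $\eps^3 n^4$ once $n \ge 4/\eps^2$.

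For $|\mathcal{L}(x_1,x_2;y_1,y_2)|$ the argument is identical apart from two modifications in the first two steps: $z_2$ is picked in $N(x_1) \setminus \{x_2, y_1, y_2\}$ with $c(z_2 x_1) \ne c(x_1 x_2)$, still giving $\ge (1/2+\eps)n - O(1)$ options; and $z_3$ is picked in $N(y_2) \cap N(z_2)$ with $c(z_3 y_2) \ne c(y_1 y_2)$, still giving $\ge 2\eps n - O(1)$. Steps three and four proceed verbatim, only with a slightly longer list of forbidden vertices, whose effect is absorbed since $n \ge 4/\eps^2$. The main ``obstacle'' is purely bookkeeping: one must check that all proper-colouring conditions for the three extended paths translate to at most two colour-forbiddings at the pivot vertex $z_2$ when choosing $z_1$ (and analogously at $z_3$ when choosing $z_4$), with the cross-vertex constraints $c(z_2 x_1) \ne c(x_1 x_2)$ and $c(z_3 y_2) \ne c(y_1 y_2)$ already handled in the first two steps. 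No deeper difficulty arises; the proof reduces entirely to a greedy count combined with inclusion--exclusion.
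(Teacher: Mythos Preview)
Your proposal is correct and follows essentially the same approach as the paper: both build $P$ greedily in the order $z_2, z_3, z_1, z_4$, using $\delta_1^c(G) \ge (1/2+\eps)n$ together with inclusion--exclusion to guarantee at least $(1/2+\eps)n$, $2\eps n - O(1)$, $2\eps n - O(1)$, $2\eps n - O(1)$ choices respectively (the paper records the last two as the looser bound $\eps n$, but the underlying count $\delta_1^c(G) - (1/2-\eps)n - O(1)$ is the same as yours). The edge-pair case is handled identically in both, with the same minor adjustments to the first two steps and the same absorption of the $O(1)$ losses via $n \ge 4/\eps^2$.
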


\begin{proof}
Fix a vertex $x \in V(G)$.
Choose a vertex $z_2 \in N(x)$.
Next pick another vertex $z_3 \in N(z_2) \cap N(x)$ such that $c(x z_2) \ne c(x z_3)$.
Notice that the number of such $z_3$ is at least $\delta^c_1(G) + \delta(G) - n \ge 2 \eps n$.
Since $\delta_1^c(G) \ge ( 1/2 + \eps ) n$, $\Delta(H) \le ( 1/2- \eps) n $ for all monochromatic subgraphs~$H$ in~$G$.
Hence, the number of $z_1 \in N(z_2) \setminus \{x, z_3\}$ such that $c(z_2z_3) \ne c(z_1z_2) \ne c(z_2x)$ is at least $ \delta^c_1(G) - ( 1/2- \eps) n -2 \ge \eps n$.
Fix one such~$z_1$.
Similarly, there are at least $\eps n$ choices for $z_4 \in N(z_3) \setminus \{x, z_1, z_2\}$ such that $c(z_2z_3) \ne c(z_3z_4) \ne c(x z_3)$.
Notice that $z_1 z_2 z_3 z_4$ is an absorbing path for $x$.
Recall that every path is assumed to be directed.
Therefore, there are at least $\delta^c_1(G) \times 2 \eps n \times \eps n \times \eps n \ge \eps^3 n^4$ many absorbing paths for $x$.

Next, fix vertex-disjoint edges $x_1 x_2$ and $y_1 y_2$ in $G$.
Choose a vertex $z_2 \in N(x_1) \setminus \{ x_2,y_1,y_2\}$ such that $c(x_1 z_2 ) \ne c(x_1x_2)$.
Pick another vertex $z_3 \in ( N(z_2) \cap N(y_2) ) \setminus \{ x_1,x_2,y_1,y_2\}$ such that $c(y_2z_3) \ne c(y_1 y_2)$.
The number of such $z_3$ is at least $\delta(G)+ \delta^c_1(G) - n -4 \ge 2 \eps n -4$.
Recall that $\Delta(H) \le ( 1/2- \eps) n $ for all monochromatic subgraphs~$H$ in~$G$.
Hence, the number of $z_1 \in N(z_2) \setminus \{x_1 ,x_2,y_1,y_2, z_3\}$ such that $c(z_2z_3) \ne c(z_1z_2) \ne c(z_2x_1)$ is at least $ \delta^c_1(G) - ( 1/2- \eps) n - 4  \ge  \eps n$.
Fix one such~$z_1$.
Similarly, there are at least $ \delta^c_1(G) - ( 1/2- \eps) n -  5 \ge  \eps n$ choices for $z_4 \in N(z_3) \setminus \{x_1,x_2,y_1,y_2, z_1, z_2\}$ such that $c(z_2z_3) \ne c(z_3z_4) \ne c(y_2 z_3)$.
Note that $z_1 z_2 z_3 z_4$ is an absorbing path for $(x_1, x_2 ; y_1, y_2)$.
Therefore, there are at least 
\begin{align*}
 (\delta^c_1(G)-4) (2 \eps n-4) \eps^2 n^2 \ge ( \eps n^2 + 2n (\eps^2 n-6 \eps -1 )+ 16  )\eps^2 n^2 \ge\eps^3 n^4
\end{align*}
 many absorbing paths for $(x_1, x_2 ;y_1, y_2)$.
\end{proof}

Lemma~\ref{lma:absorbing} is proved by a simple probabilistic argument since each of $\mathcal{L}(x)$ and $\mathcal{L} (x_1,x_2; y_1, y_2)$ is large.
We will need the following Chernoff bound for the binomial distribution (see e.g.~\cite{MR1885388}).
Recall that the binomial random variable with parameters $(n,p)$ is the sum
of $n$ independent Bernoulli variables, each taking value $1$ with probability~$p$,
or $0$ with probability $1-p$.

\begin{prp}\label{prop:chernoff}
Suppose that $X$ has the binomial distribution and $0<a<3/2$. Then
$\mathbb{P}(|X - \mathbb{E}X| \ge a\mathbb{E}X) \le 2 e^{-a^2\mathbb{E}X /3}$.
\end{prp}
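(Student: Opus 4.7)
The plan is to prove this by the standard Chernoff argument, namely Markov's inequality applied to the moment generating function. Write $X = \sum_{i=1}^n X_i$, where $X_1, \dots, X_n$ are independent Bernoulli$(p)$ variables, and set $\mu := \mathbb{E}X = np$.

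For the upper tail, for each $t > 0$, Markov's inequality gives
\[
\mathbb{P}(X \ge (1+a)\mu) = \mathbb{P}(e^{tX} \ge e^{t(1+a)\mu}) \le e^{-t(1+a)\mu}\,\mathbb{E}[e^{tX}].
\]
By independence and the elementary estimate $1+x \le e^x$ applied to $\mathbb{E}[e^{tX_i}] = 1 + p(e^t-1) \le e^{p(e^t-1)}$, one obtains $\mathbb{E}[e^{tX}] \le e^{\mu(e^t-1)}$. Optimising over $t$ by taking $t = \ln(1+a)$ yields
\[
\mathbb{P}(X \ge (1+a)\mu) \le \exp\bigl(\mu[a - (1+a)\ln(1+a)]\bigr).
\]
Analogously, for the lower tail with $0 < a < 1$, applying Markov to $e^{-tX}$ with $t = -\ln(1-a) > 0$ gives
\[
\mathbb{P}(X \le (1-a)\mu) \le \exp\bigl(\mu[-a - (1-a)\ln(1-a)]\bigr).
\]
For $1 \le a < 3/2$, the lower-tail event either forces $X = 0$ (at $a = 1$, directly bounded by $(1-p)^n \le e^{-\mu}$) or is empty since $X \ge 0$ but $(1-a)\mu < 0$; in either case, its contribution is at most $e^{-a^2 \mu/3}$.

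It remains to verify the two calculus inequalities $f(a) := (1+a)\ln(1+a) - a - a^2/3 \ge 0$ on $[0, 3/2]$ and $g(a) := a + (1-a)\ln(1-a) - a^2/3 \ge 0$ on $[0,1)$. For $g$, one checks $g(0) = g'(0) = 0$ and $g''(a) = 1/(1-a) - 2/3 > 0$ on $[0,1)$, so $g$ is strictly convex with a flat start and hence is strictly positive on $(0,1)$. For $f$, one has $f(0) = f'(0) = 0$ and $f''(a) = 1/(1+a) - 2/3$, which changes sign at $a = 1/2$; therefore $f'$ is unimodal on $[0, 3/2]$, which means $f$ attains its minimum over this interval at an endpoint. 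Since $f(0) = 0$ and the direct evaluation $f(3/2) = (5/2)\ln(5/2) - 9/4 > 0$ is positive, we conclude $f \ge 0$ on $[0, 3/2]$. A union bound combining the two tail estimates then yields $\mathbb{P}(|X - \mu| \ge a\mu) \le 2e^{-a^2 \mu/3}$.

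The main obstacle is precisely this last calculus verification for $f$: because $f''$ changes sign inside $[0,3/2]$, one cannot deduce $f \ge 0$ from simple convexity, and instead must exploit the unimodality of $f'$ together with a direct endpoint check, or alternatively carry out a sharper truncated-Taylor estimate of $(1+a)\ln(1+a)$ valid across the whole interval.
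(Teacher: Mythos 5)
Your proof is correct. Note that the paper does not prove Proposition 4.6 at all: it is quoted as a standard Chernoff bound with a citation to Alon and Spencer, so there is no in-paper argument to compare against. What you give is essentially the standard exponential-moment derivation that underlies the cited result: Markov applied to $e^{tX}$ with $t=\ln(1+a)$ for the upper tail and to $e^{-tX}$ with $t=-\ln(1-a)$ for the lower tail, followed by the two calculus facts $(1+a)\ln(1+a)-a\ge a^2/3$ on $[0,3/2]$ and $a+(1-a)\ln(1-a)\ge a^2/3$ on $[0,1)$, and a union bound. Your handling of the delicate points is sound: the convexity argument for $g$ is immediate since $g''(a)=1/(1-a)-2/3>0$; for $f$, since $f'(0)=0$ and $f''$ changes sign exactly once (at $a=1/2$), $f'$ is nonnegative up to at most one downward zero crossing, so $f$ increases and then possibly decreases, and the endpoint check $f(3/2)=(5/2)\ln(5/2)-9/4>0$ does close the argument; and the separate treatment of the lower tail for $1\le a<3/2$ (the event forces $X=0$, bounded by $(1-p)^n\le e^{-\mathbb{E}X}\le e^{-a^2\mathbb{E}X/3}$ since $a^2/3<1$, or is empty) is exactly what is needed there, with only the trivial degenerate case $\mathbb{E}X=0$ left implicit, where the claimed bound holds vacuously because the right-hand side is $2$.
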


\begin{lma} \label{lma:absorbing}
Let $0 < \gamma < 1$.
Then there exists an integer $n_0$ such that whenever $n \ge n_0$ the following holds.
 Let $G$ be an edge-coloured graph on $n$ vertices.
Suppose that $|\mathcal{L}(x)|\ge \gamma n^4 $ for every $x \in V(G)$ and $|\mathcal{L} (x_1,x_2; y_1, y_2)| \ge \gamma n^4 $ for all distinct vertices $x_1,x_2, y_1,y_2 \in V(G)$ with $x_1x_2$, $y_1y_2 \in E(G)$.
Then there exists a family $\mathcal{F}'$ of vertex-disjoint properly coloured paths each of length~$3$, which satisfies the following properties:
\begin{align*}
|\mathcal{F}'| & \le 2^{-6} \gamma n, \\
|\mathcal{L}(x) \cap \mathcal{F}'| & \ge 2^{-9} \gamma^2 n,  \\
|\mathcal{L}(x_1, x_2;y_1, y_2) \cap \mathcal{F}'| & \ge 2^{-9} \gamma^2 n
\end{align*}
for all $x \in V(G)$ and for all distinct vertices $x_1,x_2, y_1,y_2 \in V(G)$ with $x_1x_2$, $y_1y_2 \in E(G)$.
\end{lma}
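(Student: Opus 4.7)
The plan is to do a one-step random selection of length-$3$ properly coloured paths followed by a deletion of intersecting paths. Write $\mathcal{F}$ for the set of properly coloured paths of length $3$ in $G$ (viewed as ordered $4$-tuples of distinct vertices), so $|\mathcal{F}|\le n^4$. I would include each path of $\mathcal{F}$ independently with probability $p:=2^{-7}\gamma/n^3$ to obtain a random subfamily $\mathcal{F}_1\subseteq \mathcal{F}$, and write $W$ for the number of unordered pairs of paths in $\mathcal{F}_1$ sharing at least one vertex. The family $\mathcal{F}'$ is then obtained from $\mathcal{F}_1$ by deleting one path from each such pair; the resulting $\mathcal{F}'$ is pairwise vertex-disjoint and satisfies $|\mathcal{F}_1\setminus \mathcal{F}'|\le W$.

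A direct expectation calculation gives $\mathbb{E}[|\mathcal{F}_1|]\le p n^4 = 2^{-7}\gamma n$, and by hypothesis
\[
\mathbb{E}[|\mathcal{L}(x)\cap \mathcal{F}_1|],\ \mathbb{E}[|\mathcal{L}(x_1,x_2;y_1,y_2)\cap \mathcal{F}_1|] \ge p\gamma n^4 = 2^{-7}\gamma^2 n
\]
for every vertex $x$ and every valid tuple $(x_1,x_2;y_1,y_2)$. To control $W$, note that each path $P\in \mathcal{F}$ meets at most $4\cdot 4\cdot n^3 = 16n^3$ other paths in $\mathcal{F}$ (choose a vertex of $P$, its position in the other path, and the remaining three vertices freely), so $\mathcal{F}$ contains at most $8n^7$ unordered intersecting pairs, and hence $\mathbb{E}[W]\le 8p^2 n^7 = 2^{-11}\gamma^2 n$.

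Next I would apply Proposition~\ref{prop:chernoff} with $a=1/2$ to each of the at most $1+n+n^4$ binomial counts above, and Markov's inequality to $W$. Every relevant expectation is at least $2^{-11}\gamma^2 n$, so for $n\ge n_0(\gamma)$ large each Chernoff failure probability is much smaller than $n^{-4}$; a union bound then provides a realisation satisfying, simultaneously, $|\mathcal{F}_1|\le 2^{-6}\gamma n$, $|\mathcal{L}(x)\cap \mathcal{F}_1|\ge 2^{-8}\gamma^2 n$ for every $x$, the same lower bound for every valid tuple, and $W\le 2\mathbb{E}[W]\le 2^{-10}\gamma^2 n$. Fixing such a realisation and carrying out the deletion step gives $|\mathcal{F}'|\le 2^{-6}\gamma n$ and
\[
|\mathcal{L}(x)\cap \mathcal{F}'|\ge |\mathcal{L}(x)\cap \mathcal{F}_1|-W\ge 2^{-8}\gamma^2 n - 2^{-10}\gamma^2 n\ge 2^{-9}\gamma^2 n,
\]
with the analogous bound for each $|\mathcal{L}(x_1,x_2;y_1,y_2)\cap \mathcal{F}'|$.

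The one genuinely delicate point is the calibration of $p$: it must be small enough for $\mathbb{E}[W]$ to be negligible relative to each $\mathbb{E}[|\mathcal{L}\cap \mathcal{F}_1|]$, yet large enough that the latter exceeds the target $2^{-9}\gamma^2 n$ even after a factor-of-two Chernoff loss. The value $p=2^{-7}\gamma/n^3$ puts $\mathbb{E}[|\mathcal{L}\cap \mathcal{F}_1|]$ a factor of four above the target and a factor of sixteen above $\mathbb{E}[W]$, which is about the tightest compatible choice; once $p$ is fixed, the concentration inputs are routine and all three required inequalities survive the union bound and the cleanup.
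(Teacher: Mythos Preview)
Your proposal is correct and follows essentially the same approach as the paper: random selection of length-$3$ paths with probability $\Theta(\gamma n^{-3})$, Chernoff for the $O(n^4)$ intersection counts, Markov for the number of intersecting pairs, then cleanup. The only cosmetic differences are that the paper samples all $4$-tuples (not just properly coloured paths) and discards the non-absorbing ones at the end, and takes $p=2^{-7}\gamma(n-4)!/(n-1)!$ so that $\mathbb{E}|\mathcal{F}|=2^{-7}\gamma n$ exactly; the resulting constants match yours to within a factor of~$2$.
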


\begin{proof}
Choose $n_0 \in \mathbb{N}$ large so that 
\begin{align}
\exp (- \gamma n_0 / (3 \times 2^7)) + ( n_0 + n_0^4 ) \exp (- \gamma^2 n_0 / (3 \times 2^9))\le 1/6.
\label{eqn:n0}
\end{align}
Recall that each path is assumed to be directed.
So a path $z_1 z_2 z_3 z_4$ will be considered as a $4$-tuple $(z_1, z_2, z_3, z_4)$.
Choose a family $\mathcal{F}$ of $4$-tuples in $V(G)$ by selecting each of the $n!/ (n-4)!$ possible $4$-tuples independently at random with probability 
\begin{align*}
p =  2^{-7}\gamma \frac{ (n-4 )!}{(n-1)!}\ge 2^{-7} \gamma n^{-3}.
\end{align*}
Notice that 
\begin{align*}
\mathbb{E}|\mathcal{F}|  & = p \frac{n!}{(n-4)!} = 2^{-7} \gamma n,\\
\mathbb{E}|\mathcal{L}(x) \cap \mathcal{F}| & = p |\mathcal{L}(x)| \ge 2^{-7}\gamma^2 n, \\
\mathbb{E}|\mathcal{L}(x_1, x_2;y_1, y_2) \cap \mathcal{F}| & = p |\mathcal{L}(x_1, x_2;y_1, y_2)| \ge 2^{-7}\gamma^2 n
\end{align*}
for every $x \in V(G)$ and for all distinct $x_1,x_2, y_1,y_2 \in V(G)$ with $x_1x_2$, $y_1y_2 \in E(G)$.
Then by Proposition~\ref{prop:chernoff}, the union bound and~\eqref{eqn:n0} with probability at least~$5/6$, the family $\mathcal{F}$ satisfies the following properties:
\begin{align}
|\mathcal{F}| & \le 2 \mathbb{E}(|\mathcal{F}|)  = 2^{-6} \gamma n, \label{eqn:|F|}\\
|\mathcal{L}(x) \cap \mathcal{F}| & \ge 2^{-1} \mathbb{E}(|\mathcal{L}(x) \cap \mathcal{F}|)
\ge 2^{-8} \gamma^2 n,
\label{eqn:L1}\\
|\mathcal{L}(x_1, x_2;y_1, y_2) \cap \mathcal{F}| & \ge 2^{-1} \mathbb{E}( |\mathcal{L}(x_1, x_2;y_1, y_2) \cap \mathcal{F}|)
\ge 2^{-8} \gamma^2 n
\label{eqn:L2}
\end{align} 
for every $x \in V(G)$ and for all distinct $x_1,x_2, y_1,y_2 \in V(G)$ with $x_1x_2$, $y_1y_2 \in E(G)$.

We say that two $4$-tuples $(a_1, a_2,a_3,a_4)$ and $(b_1, b_2,b_3,b_4)$ are \emph{intersecting} if $a_i = b_j$ for some $1 \le i,j\le 4$.
Furthermore, we can bound the expected number of pairs of $4$-tuples in $\mathcal{F}$ that are intersecting from above by
\begin{align}
	\frac{n!}{(n-4)!} \times 4^2 \times \frac{(n-1)!}{(n-4)!} \times p^2 =  2^{-10} \gamma^2  n.
\nonumber
\end{align}
Thus, using Markov's inequality, we derive that with probability at least~$1/2$,
\begin{align}
	\textrm{$\mathcal{F}$ contains at most $ 2^{-9} \gamma^2 n$ intersecting pairs of $4$-tuples.} \label{eqn:F}
\end{align}
Hence, with positive probability the family $\mathcal{F}$ has all properties stated in \eqref{eqn:|F|}--\eqref{eqn:F}.
Remove one $4$-tuple in each intersecting pair in such a family~$\mathcal{F}$.
Further remove those $4$-tuples that are not absorbing paths.
We get a subfamily $\mathcal{F}'$ consisting of pairwise disjoint $4$-tuples, which satisfies
\begin{align}
|\mathcal{L}(x) \cap \mathcal{F}'| > &2^{-8} \gamma^2 n -2^{-9} \gamma^2 n= 2^{-9} \gamma^2 n \nonumber
\end{align}
for every $x \in V(G)$ and a similar statement holds for $|\mathcal{L}(x_1,x_2;y_1,y_2) \cap \mathcal{F}'|$.
Since each $4$-tuple in $\mathcal{F}'$ is an absorbing path, $\mathcal{F}'$ is a set of vertex-disjoint properly coloured paths of length~$3$.
\end{proof}

In order to prove Lemma~\ref{lma:abscycle}, it is sufficient to join the paths in $\mathcal{F}'$ given by Lemma~\ref{lma:absorbing} into a short properly coloured cycle $C$.
The next lemma shows that we can join any two disjoint edges into a properly coloured path of length~$5$.

\begin{lma} \label{lma:ifar}
Suppose that $G$ is an edge-coloured graph of order $n$ with $\delta^c_1(G) \ge 2n/3 +1$.
Let $x_1x_2$, $y_1y_2$ be two edges in $G$ with $x_2 \ne y_2$.
Then there exists an edge $z_1z_2$ with $z_1$, $z_2 \in V(G) \setminus \{ x_1, x_2, y_1, y_2\}$ such that both $x_1x_2z_1z_2$ and $z_1z_2y_1y_2$ are properly coloured paths. 
In particular, if $x_1$, $x_2$, $y_1$ and $y_2$ are distinct, then $x_1x_2z_1z_2y_1y_2$ is a properly coloured path.
\end{lma}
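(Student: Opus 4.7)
The plan is to find $z_1, z_2$ by a double-counting argument in the two neighbourhoods naturally attached to the two given edges. Write $d = \delta_1^c(G) \ge 2n/3 + 1$. Two consequences of the hypothesis will be used throughout: for every vertex $v$ and every colour $c$, at least $d$ neighbours $u$ of $v$ satisfy $c(vu) \ne c$ (by definition of $\delta_1^c$ applied to the monochromatic subgraph of colour $c$); and every monochromatic subgraph $H$ has $\Delta(H) \le n - 1 - d \le n/3 - 2$. Set
\[
N_1 = \{z : x_2 z \in E(G),\ c(x_2 z) \ne c(x_1 x_2)\}, \qquad N_2 = \{z : z y_1 \in E(G),\ c(z y_1) \ne c(y_1 y_2)\},
\]
each of size at least $d$, and trim to $N_1' = N_1 \setminus \{y_1, y_2\}$ and $N_2' = N_2 \setminus \{x_1, x_2\}$, both subsets of $V(G) \setminus \{x_1, x_2, y_1, y_2\}$ of size at least $d - 2 \ge 2n/3 - 1$. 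I will look for a \emph{good pair} $(z_1, z_2) \in N_1' \times N_2'$ with $z_1 z_2 \in E(G)$, $c(z_1 z_2) \ne c(x_2 z_1)$ and $c(z_1 z_2) \ne c(z_2 y_1)$; any such pair immediately yields the edge $z_1 z_2$ required by the lemma.

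To produce such a pair, let $G'$ be the set of pairs $(z_1, z_2) \in N_1' \times N_2'$ with $z_1 z_2 \in E(G)$ and $c(z_1 z_2) \ne c(x_2 z_1)$, and let $\mathrm{Bad} \subseteq G'$ consist of those pairs for which $c(z_1 z_2) = c(z_2 y_1)$. For each $z_1 \in N_1'$, the first degree fact applied at $z_1$ to the colour $c(x_2 z_1)$ supplies at least $d$ neighbours of $z_1$ in other colours; intersecting with $N_2'$ via $|A \cap B| \ge |A| + |B| - n$ gives at least $d + |N_2'| - n$ valid choices of $z_2$ (automatically distinct from $z_1$), so
\[
|G'| \ge |N_1'|\bigl(d + |N_2'| - n\bigr).
\]
Dually, for each $z_2 \in N_2'$ the monochromatic degree bound at $z_2$ in colour $c(z_2 y_1)$ gives at most $n - 1 - d$ vertices $z_1$ with $z_1 z_2 \in E(G)$ and $c(z_1 z_2) = c(z_2 y_1)$, hence $|\mathrm{Bad}| \le |N_2'|(n - 1 - d)$. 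On the feasible range $|N_1'|, |N_2'| \ge d - 2$, both partial derivatives of $|N_1'|(d + |N_2'| - n) - |N_2'|(n - 1 - d)$ are positive (equal to $d + |N_2'| - n$ and $|N_1'| - (n - 1 - d)$, which one checks are at least $n/3$ using $d \ge 2n/3 + 1$), so the minimum is attained at $|N_1'| = |N_2'| = d - 2$, yielding
\[
|\mathrm{Good}| \ge (d - 2)(3d - 2n - 1) \ge 2(d - 2) \ge 4n/3 - 2 > 0.
\]

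Therefore a good pair exists. Since $z_1, z_2 \in V(G) \setminus \{x_1, x_2, y_1, y_2\}$ with $z_1 \ne z_2$, both $x_1 x_2 z_1 z_2$ and $z_1 z_2 y_1 y_2$ are properly coloured paths by the defining conditions. When $x_1, x_2, y_1, y_2$ are pairwise distinct the six vertices $x_1, x_2, z_1, z_2, y_1, y_2$ are all distinct, and concatenating across the shared edge $z_1 z_2$ gives the properly coloured path $x_1 x_2 z_1 z_2 y_1 y_2$. The main technical obstacle is the calibration of the double count: crude estimates such as $|\mathrm{Bad}| \le n(n - 1 - d)$ swamp the available positive terms, and the essential trick is to retain $|N_2'|$ as a factor in both the Good lower bound and the Bad upper bound so that the monotonicity of $|\mathrm{Good}|$ in $|N_2'|$ can be exploited to leave a residue linear in $n$.
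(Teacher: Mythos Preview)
Your proof is correct and follows essentially the same approach as the paper: define the candidate sets $N_1'$ and $N_2'$ (the paper's $X$ and $Y$) of size at least $d-2$, and show by double counting that some pair $(z_1,z_2)\in N_1'\times N_2'$ satisfies all three colour constraints. The only cosmetic difference is in packaging the count---the paper sets up an auxiliary bipartite digraph and lower-bounds the number of directed $2$-cycles symmetrically via $\sum_X d^+ + \sum_Y d^+ - |X||Y|$, whereas you argue asymmetrically as $|\text{Good}| = |G'| - |\text{Bad}|$ using the monochromatic degree bound $n-1-d$ on the $N_2'$ side; both yield a positive residue linear in~$n$.
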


\begin{proof}
Let $X$ be the set of vertices $x \in N(x_2) \setminus  \{ x_1, x_2, y_1, y_2\}$ such that $c(x x_2) \ne c(x_1x_2)$.
Similarly, let $Y$ be the set of vertices $y \in N(y_1) \setminus  \{ x_1, x_2, y_1, y_2\}$ such that $c(y y_1) \ne c(y_1y_2)$.
So $|X|, |Y| \ge \delta^c_1(G) - 2 \ge 2n/3 -1$.
Define an auxiliary bipartite directed graph $H$ with vertex classes $X$ and $Y$ such that for $x \in X$ and $y \in Y$ 
\begin{itemize}
	\item[\rm (a)] $\overrightarrow{xy} \in E(H)$ if and only if $xy \in E(G)$ and $c(x y) \ne c(x x_2)$;
	\item[\rm (b)] $\overrightarrow{yx} \in E(H)$ if and only if $xy \in E(G)$ and $c(x y) \ne c(y y_1)$.
\end{itemize}
(Recall that in $H$ we treat $X$ and $Y$ to be disjoint.)
The outdegree of each $x \in X$ in $H$ is 
\begin{align*}
d^+_H(x) & = | Y \cap \{z \in N_G(x) :  c(xz) \ne c(x x_2)\}| \\
& \ge  |Y| + \delta^c_1(G)- n  \ge (|Y| + 1)/2
\end{align*}
and similarly for each $y \in Y$, $d^+_H(y) \ge (|X| + 1)/2$.
Hence the number of directed $2$-cycles in $H$ is at least
\begin{align*}
	\sum_{x \in X} d^+_H(x) + \sum_{y \in Y} d^+_H(y) - |X||Y|
		& \ge  \frac{|X|(|Y| + 1 )}2  + \frac{|Y|(|X| + 1)}2 - |X||Y| \\
	& = \frac{|X| +|Y| }2 \ge  2n/3 - 1 .
\end{align*}
Let $z_1z_2$ be a directed $2$-cycle in $H$ with $z_1 \in X$ and $z_2 \in Y$. 
This implies that $c(x_2z_1) \ne c(z_1z_2) \ne c(z_2 y_1)$.
Hence, $x_1 x_2 z_1 z_2 $ and $z_1 z_2 y_1 y_2$ are properly coloured paths.
Therefore, the lemma follows.
\end{proof}

We are ready to prove Lemma~\ref{lma:abscycle}.
Given a graph family $\mathcal{F}$, we write $V(\mathcal{F}) = \bigcup_{F \in \mathcal{F}} V (F)$.

\begin{proof}[Proof of Lemma~$\ref{lma:abscycle}$]
Let $\gamma = 2^6 \eps  /9$.
Choose $n_0 \in \mathbb{N}$ large so that $n_0 \ge 4/ \eps^2$ and Lemma~\ref{lma:abspath} holds.
By Lemma~\ref{lma:abspath}, $\mathcal{L}(x)\ge \gamma  n^4$ for all $x \in V(G)$ and 
$\mathcal{L}(x_1, x_2; y_1, y_2)\ge \gamma  n^4$ for all distinct vertices $x_1,x_2, y_1,y_2 \in V(G)$ with $x_1x_2$, $y_1y_2 \in E(G)$.
Let $\mathcal{F}'$ be the set of properly coloured paths obtained by Lemma~\ref{lma:absorbing}.
Therefore, $|\mathcal{F}'|  \le 2^{-6} \gamma n = \eps n /9$,
\begin{align}
 |\mathcal{L}(x) \cap \mathcal{F}'| & \ge  2^{-9} \gamma^2 n = \frac{8 \eps^2 n}{81} \text{ and } 
|\mathcal{L}(x_1, x_2; y_1, y_2) \cap \mathcal{F}'| & \ge  \frac{8 \eps^2 n}{81}& \label{eqn:L3}
\end{align}
for all $x \in V(G)$ and for all distinct vertices $x_1,x_2, y_1,y_2 \in V(G)$ with $x_1x_2$, $y_1y_2 \in E(G)$.

We now show that $C$ has the desired property.
Let $P_1, \dots, P_{|\mathcal{F}'|}$ be the properly coloured paths in $\mathcal{F}'$.
For each $1 \le j \le |\mathcal{F}'|$, we are going to find an edge $z_1^j z_2^j$ such that $\{z_1^j, z_2^j\} \cap V(\mathcal{F}') = \emptyset$, $P_j  z_1^j z_2^j P_{j+1}$ is a properly coloured path, where we take $P_{|\mathcal{F}'|+1} = P_1$, and $\{z_1^j, z_2^j\} \cap \{z_1^{j'}, z_2^{j'}\} = \emptyset$ for all $j \ne j'$.
Assume that we have already found edges $z_1^1 z_2^1,\dots,z_1^{j-1} z_2^{j-1}$ for some $1 \le j \le |\mathcal{F}'|$. 
Let $P_j = v_1 v_2 v_3 v_{4}$ and $P_{j+1} = v'_1 v'_2 v'_3  v'_{4}$.
Set 
\begin{align*}
W_j = \left( V ( \mathcal{F}' )  \cup  \bigcup_{1 \le j' < j} \{ z_1^{j'}, z_2^{j'} \} \right) \setminus \{v_3,v_4,v'_1,v'_2\}.
\end{align*}
Note that 
\begin{align*}
|W_j| = 4 | \mathcal{F}'| +  2  (j-1)  -4 < 6 |\mathcal{F}'|  \le 2 \eps n /3.
\end{align*}
Define $G_j = G [ V(G) \setminus W_j ]$, so $\delta^c_1(G_j) \ge 2n/3 + \eps n/3 \ge 2n/3+1$. 
Apply Lemma~\ref{lma:ifar} with $G = G_j$, $x_1 = v_{3}$, $x_2 = v_{4}$, $y_1 = v'_1$ and $y_2 = v'_2$, to obtain an edge $z_1^{j} z_2^{j}$ such that $ v_{3} v_{4} z_1^j z_2^j v'_1 v'_2$ is a properly coloured path in $G_j$.
This implies that $P_jz_1^j z_2^j P_{j+1}$ is a properly coloured path in~$G$.
Therefore, there exist vertex-disjoint edges $z_1^1 z_2^1, \dots, z_1^{|\mathcal{F}'|} z_2^{|\mathcal{F}'|}$ as desired.
Let $C$ be the properly coloured cycle obtained by concatenating $P_1, z_1^{1} z_2^{1} ,P_2, z_1^{2} z_2^{2}, \dots, P_{|\mathcal{F}'|}, z_1^{|\mathcal{F}'|} z_2^{|\mathcal{F}'|}$.
Note that $|C| = 6  | \mathcal{F}' | \le 2 \eps n /3$.

Suppose that $\mathcal{P}$ is a family of at most $ (8 \eps^2 n)/243$ vertex-disjoint properly coloured paths in $V(G) \setminus V(C)$.
Let $\mathcal{P}'$ be the family of vertex-disjoint properly coloured paths obtained from $\mathcal{P}$ by breaking up every path $P \in \mathcal{P}$ with $|P| \le 3$ into isolated vertices.
Hence, $\mathcal{P}'$ contains at most $ (8 \eps^2 n)/81$ paths and, for each path $P \in \mathcal{P}'$, either $|P| =1$ or $|P| \ge 4$.
Now, we assign each $P \in\mathcal{P}'$ to a path $Q  \in \mathcal{F'}$ such that $Q \in \mathcal{L}(V(P))$ if $|P| = 1$ and $Q \in \mathcal{L}(x_1,x_2;x_{\ell-1}, x_{\ell})$ if $P = x_1x_2 \dots x_{\ell}$ with $\ell \ge 4$.
Moreover, no two paths in $\mathcal{P}'$ are assigned to the same $Q \in \mathcal{F'}$.
Note that such an assignment exists by~\eqref{eqn:L3}.
Apply Proposition~\ref{prp:abspath} to each pair $(P, Q)$ and obtain a family $\mathcal{F}''$ of vertex-disjoint paths such that $|\mathcal{F}'| = |\mathcal{F}''|$.
Note that 
\begin{align*}
V(\mathcal{F}'') = V(\mathcal{F}')  \cup V(\mathcal{P}')= V(\mathcal{F}')  \cup V(\mathcal{P}).
\end{align*}
Moreover, there is a one-to-one correspondence between paths $P' \in \mathcal{F}'$ and $P'' \in \mathcal{F}''$ such that the endedges of $P'$ and $P''$ are the same.
Recall that $C$ is a properly coloured cycle containing $\mathcal{F}'$.
Let $C'$ be the cycle obtained from $C$ by replacing the paths in $\mathcal{F}'$ with paths in $\mathcal{F}''$.
Note that $C'$ is properly coloured and $V(C') = V(C) \cup V(\mathcal{P})$.
This completes the proof of Lemma~\ref{lma:abscycle}.
\end{proof}

\section{Proof of Theorem~$\ref{thm:PCHC2}$} \label{sec:proof}

First, we prove that $G$ contains a properly coloured triangle if $\delta^c_1(G) > 2n/3-1$.

\begin{prp} \label{prp:triangle}
Let $G$ be an edge-coloured graph on $n$ vertices with $\delta^c_1(G) > 2n/3-1$.
Then every vertex in $G$ is contained in a properly coloured triangle.
\end{prp}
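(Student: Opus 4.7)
The plan is to fix any vertex $v \in V(G)$ and locate $x \in N_G(v)$ together with a common neighbour $y$ of $v$ and $x$ so that $c(vx)$, $c(vy)$ and $c(xy)$ are pairwise distinct; then $vxyv$ is a properly coloured triangle through $v$. The key consequence of $\delta^c_1(G) > 2n/3 - 1$ I will need is the maximum-degree bound
\[
d_{H_\beta}(u) = d_G(u) - d_{G-H_\beta}(u) \le (n-1) - \delta^c_1(G) < n/3
\]
for every colour $\beta$ and every vertex $u$, where $H_\beta$ denotes the subgraph of all colour-$\beta$ edges. Thus $\Delta(H_\beta) < n/3$ for every colour $\beta$.

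For any $x \in N_G(v)$ set $\alpha = c(vx)$ and $N_\alpha(u) = \{w \in N_G(u) : c(uw) \ne \alpha\}$ for $u \in \{v,x\}$, so that $|N_\alpha(u)| = d_{G-H_\alpha}(u) \ge \delta^c_1(G)$. Since $x \notin N_\alpha(v)$ and $v \notin N_\alpha(x)$, both sets lie in $V(G)\setminus\{v,x\}$, giving the candidate bound
\[
|N_\alpha(v) \cap N_\alpha(x)| \;\ge\; 2\delta^c_1(G) - (n-2) \;>\; n/3.
\]
Every $y$ in this intersection satisfies $c(vy),c(xy) \ne \alpha$, so $vxyv$ is a properly coloured triangle as soon as $c(vy) \ne c(xy)$. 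I will call such a $y$ \emph{bad} for $x$ if instead $c(vy) = c(xy) = \beta$ for some colour $\beta$, in which case $y \in N_{H_\beta}(v) \cap N_{H_\beta}(x)$; consequently the number of bad $y$ is at most $\sum_\beta |N_{H_\beta}(v) \cap N_{H_\beta}(x)|$.

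The main step is then an averaging argument over $x \in N_G(v)$. Swapping the order of summation and using $\Delta(H_\beta) < n/3$,
\[
\sum_{x \in N_G(v)} \sum_\beta |N_{H_\beta}(v) \cap N_{H_\beta}(x)|
 = \sum_\beta \sum_{y \in N_{H_\beta}(v)} |N_G(v) \cap N_{H_\beta}(y)|
 \le \sum_\beta |N_{H_\beta}(v)|\,\Delta(H_\beta)
 < \frac{n}{3}\sum_\beta d_{H_\beta}(v)
 = \frac{n}{3}\,d_G(v).
\]
Dividing by $|N_G(v)| = d_G(v)$, some $x \in N_G(v)$ satisfies $\sum_\beta |N_{H_\beta}(v) \cap N_{H_\beta}(x)| < n/3 < |N_\alpha(v) \cap N_\alpha(x)|$, so a non-bad $y$ exists in the intersection and $vxyv$ is the required properly coloured triangle through $v$.

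I do not foresee any real obstacle. The whole argument is a short double-count once $\Delta(H_\beta) < n/3$ has been extracted and the candidate/bad decomposition set up; the only delicate point is that the strict inequality in the hypothesis $\delta^c_1(G) > 2n/3 - 1$ is exactly what makes both the candidate-size lower bound and the averaged bad-count upper bound strictly compare against $n/3$, so that $n/3$ can be wedged between them.
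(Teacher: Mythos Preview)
Your argument is correct and takes a genuinely different route from the paper's. The paper fixes the central vertex (called $x$ there), builds an auxiliary \emph{directed} graph $H$ on $N(x)$ in which $\overrightarrow{yz}$ records that $c(yz)\ne c(xy)\ne c(xz)$, and then argues by contradiction: if no directed $2$-cycle exists, every in-arc $\overrightarrow{zy}$ forces $c(zy)=c(xy)$, and an in/out-degree averaging produces a vertex $y$ with $d_G(y)\ge 3\delta^c_1(G)-n+2$, contradicting $d_G(y)\le n-1$. Your proof instead averages over the choice of the \emph{first} neighbour $x$ of $v$ and bounds the number of ``bad'' common neighbours by a direct double count of monochromatic $2$-paths through $v$, using only the crude bound $\Delta(H_\beta)<n/3$. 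Both proofs ultimately squeeze the same inequalities, but yours avoids the auxiliary digraph and the contradiction, and makes the role of the monochromatic maximum degree more transparent; the paper's version, on the other hand, localises the argument entirely inside $N(x)$ once $x$ is fixed, which is closer in spirit to the digraph techniques used elsewhere in the paper.
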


\begin{proof}
Let $x$ be a vertex in $G$ and let $c$ be the edge-colouring on $G$.
Define $H$ to be the directed graph on $N(x)$ with directed edges $\overrightarrow{yz}$ if and only if $yz$ is an edge in $G[N(x)]$ with $c(yz) \ne c(xy) \ne c(xz)$.
For $y \in V(H)$, the outdegree of $y$ in $H$ is
\begin{align*}
	d^+_H(y)& \ge |\{z \in N_G(y) : c(yz) \ne c(xy)\}| +|\{z \in N_G(x) : c(xz) \ne c(xy)\}| - (n-2) \\
	& \ge  2 \delta^c_1(G) - n +2.
\end{align*}
Note that if $yz$ is a directed 2-cycle in $H$, then $\{x,y,z\}$ forms a properly coloured triangle in~$G$.
Hence, we may assume that $H$ does not contain any directed 2-cycles.
Moreover, we may assume that if $\overrightarrow{zy}$ is in $H$, then $c(zy) = c(xy)$.
By an averaging argument, there exists a vertex $y \in V(H)$ such that $d^-_H(y) \ge d^+_H(y) \ge 2 \delta^c_1(G) - n + 2$.
Therefore
\begin{align*}
	d_G(y) & \ge |\{z \in N_G(y) : c(yz) = c(xy)\}| + |\{z \in N_G(y) : c(yz) \ne c(xy)\}|\\
	& \ge d^-_H(y) + \delta^c_1(G) \ge 3 \delta^c_1(G) - n + 2.
\end{align*}
Since $d_G(y) \le n-1$, the inequality above implies that $\delta^c_1(G) \le 2n/3 - 1$, a contradiction.
\end{proof}

Finally, we prove Theorem~\ref{thm:PCHC2}.

\begin{proof}[Proof of Theorem~$\ref{thm:PCHC2}$]
Without loss of generality, we may assume that $\eps < 2^{-9}3^{-1}$.
Choose $n_0 \in \mathbb{N}$ large so that $\eps(1-2\eps /3 )n_0/3 \ge 1$, $n_0 \ge 243 \eps^{-3}$, and Lemma~\ref{lma:abscycle} holds.
Let $G$ be an edge-coloured graph on $n$ vertices with $\delta^c_1(G) \ge (2/3 + \eps) n$ as stated in Theorem~\ref{thm:PCHC2}.
By Proposition~\ref{prp:triangle}, $G$ contains a properly coloured triangle.

Suppose that $\ell$ is an integer with $4 \le \ell \le 2\eps n /3 $.
Since $\delta_1^c(G) \ge (2/3+ \eps ) n$, we can greedily construct a properly coloured path of length $2n/3$.
In particular, $G$ contains a properly coloured path $P = x_1 x_2 \dots x_{\ell - 2}$ of length~$\ell-3$.
Let $G'$ be the subgraph of $G$ obtained by removing all the vertices $x_3,x_4, \dots, x_{\ell-4}$.
Note that 
\begin{align*}
\delta_1^c(G') \ge \delta_1^c(G) - (\ell -6) \ge (2/3 + \eps/3)n \ge 2|G'|/3+1.
\end{align*}
Hence, Lemma~\ref{lma:ifar} implies that there exist an edge $z_1z_2$ in $G'$ such that $x_{\ell-3} x_{\ell-2} z_1 z_2 x_1 x_2$ is a properly coloured path.
Therefore, $x_1 x_2 \dots x_{\ell-2} z_1 z_2 x_1$ is a properly coloured cycle of length~$\ell$.

Suppose that $2\eps n / 3 < \ell  \le n$.
Let $C$ be the properly coloured cycle given by Lemma~\ref{lma:abscycle}, so $|C| \le 2 \eps n /3$.
Let $G''$ be the subgraph of $G$ obtained after removing all the vertices of $C$.
Note that 
\begin{align*}
\delta_1^c(G'') > \delta_1^c(G) - |C| \ge (2+\eps)n/3 \ge (2+\eps)|G''|/3.
\end{align*}
Note that $\eps |G''|/3 \ge \eps (1-2\eps /3 )n/3 \ge 1$.
By Lemma~\ref{lma:2factor1}, $G''$ can be covered by at most $ \lfloor 6 \eps^{-1} \rfloor$ vertex-disjoint properly coloured paths.
That is, we can find vertex-disjoint properly coloured paths $P_1, \dots, P_k$ in $G''$ such that $k \le 6 \eps^{-1} \le 8 \eps^2 n/243$, $V(P_i) \cap V(P_j) = \emptyset$ for all $i \ne j$ and $\bigcup_{1 \le i \le k } V(P_i) = V(G'')$.
By removing vertices in the paths, we may assume that the paths $P_1, \dots, P_k$ span exactly $\ell - |C|$ vertices.
By the property of~$C$ guaranteed by Lemma~\ref{lma:abscycle}, there exists a properly coloured cycle $C'$ with $V(C') = V(C) \cup \bigcup_{1 \le i \le k } V(P_i)$. 
Note that $|C'| = |C| + \sum_{1 \le i \le k } |P_i|  = \ell$.
Therefore, $C'$ is a properly coloured cycle of length~$\ell$ as required.
\end{proof}

\section{Acknowledgements}

The author would like to thank Katherine Staden for her valuable comments.
The author thanks the referees for their careful reading, one of whom in particular read the paper extremely carefully and suggested helpful clarifications.

\end{document}